\newtheorem{theorem}{Theorem}[section]
\newtheorem{lemma}[theorem]{Lemma}
\newtheorem{proposition}[theorem]{Proposition}
\newtheorem{corollary}[theorem]{Corollary}
\theoremstyle{definition}
\newtheorem{definition}[theorem]{Definition}
\theoremstyle{remark}
\newtheorem{remark}[theorem]{Remark}
\renewcommand{\:}{\colon}
\newcommand{\inv}{^{-1}}
\newcommand{\suchthat}{\,|\,}
\renewcommand{\And}{\mathbin{\wedge}}
\newcommand{\Or}{\mathbin{\vee}}
\newcommand{\Not}{\neg}
\newcommand{\IM}{\quad\Longrightarrow\quad}
\newcommand{\EV}{\quad\Longleftrightarrow\quad}
\newcommand{\cat}{\mathsf}
\newcommand{\FF}{\mathbb{F}}
\newcommand{\RR}{\mathbb{R}}
\newcommand{\CC}{\mathbb{C}}
\newcommand{\inc}{\mathrm{inc}}
\newcommand{\coker}{\mathrm{coker}}
\newcommand{\id}{\mathrm{id}}
\newcommand{\iso}{\cong}
\newcommand{\Tr}{\mathrm{Tr}}
\newcommand{\Fam}{\mathrm{Fam}}
\newcommand{\Mor}{\mathrm{Mor}}
\newcommand{\tensor}{\otimes}
\begin{document}

\title{Axioms for the category of sets and relations}
\author{Andre Kornell}
\address{Department of Mathematics and Statistics, Dalhousie University, Halifax, Nova Scotia}
\email{akornell@dal.ca}
\thanks{This work was
supported by the Air Force Office of Scientific Research under Award No.~FA9550-21-1-0041.}

\begin{abstract}
We provide axioms for the dagger category of sets and relations that recall recent axioms for the dagger category of Hilbert spaces and bounded operators.
\end{abstract}

\maketitle

\section{Introduction}\label{introduction}

A \emph{dagger category} is a category $\cat{C}$ with an operation $(-)^\dag\: \Mor(\cat{C}) \to \Mor(\cat{C})$ such that
\begin{enumerate}
\item $\id_X^\dag = \id_X$ for each object X;
\item $f^{\dag \dag} = f$ for each morphism $f$;
\item $(f \circ g)^\dag = g^\dag \circ f^\dag$ for all composable pairs $(f, g)$.
\end{enumerate}
Two prominent examples of dagger categories are $\cat{Rel}$, the dagger category of sets and binary relations, and $\cat{Hilb}_\FF$, the dagger category of Hilbert spaces and bounded operators over $\FF$, where $\FF = \RR$ or $\FF = \CC$. For a binary relation $r$, the binary relation $r^\dag$ is the converse of $r$, and for a bounded operator $a$, the bounded operator $a^\dag$ is the Hermitian adjoint of $a$.

The dagger categories $\cat{Rel}$ and $\cat{Hilb}_\FF$ have many properties in common. These properties may be expressed in terms of morphisms that behave like the embedding of one object into another. These morphisms are characterized by the conjunction of two familiar properties: a morphism $m\:X \to Y$ is said to be a dagger monomorphism if $m^\dag \circ m = \id_X$, and it is said to be a normal monomorphism if it is the kernel of some morphism $Y \to Z$.

Following Heunen and Jacobs, we use the term \emph{dagger kernel} for morphisms that are both dagger monomorphisms and normal monomorphisms \cite{HeunenJacobs}. Heunen and Jacobs showed that in any dagger category satisfying axioms A and B, below, each dagger kernel $m$ has a \emph{complement} $m^\perp$, which is a kernel of $m^\dag$. Explicitly, they showed that $m$ and $m^{\perp\perp}$ are isomorphic as morphisms into their shared codomain. Two dagger kernels $m$ and $n$ are said to be \emph{orthogonal} if $m^\dag \circ n$ is zero or, equivalently, if $m$ factors through $n^\perp$. A dagger kernel in $\cat{Rel}$ is an injective function, and a dagger kernel in $\cat{Hilb}_\FF$ is a linear isometry.

The dagger categories $\cat{Rel}$ and $\cat{Hilb}_\FF$ are also symmetric monoidal categories when they are equipped with the Cartesian product and the tensor product, respectively. They are said to be \emph{dagger symmetric monoidal categories} because their symmetric monoidal structures are compatible with their dagger structures: their monoidal products preserve the dagger operation in the obvious way, and their coherence isomorphisms are all dagger kernels. 

The dagger symmetric monoidal categories $\cat{Rel}$ and $\cat{Hilb}_\FF$ both satisfy the following axioms:

\begin{enumerate}[ (A)]
\item there is a zero object;
\item each morphism has a kernel that is a dagger kernel;
\item each pair of complementary dagger kernels is jointly epic;
\item each pair of objects has a coproduct whose inclusions are orthogonal dagger kernels;
\item the monoidal unit is not a zero object;
\item each nonzero endomorphism of the monoidal unit is invertible;
\item the monoidal unit is a monoidal separator.
\end{enumerate}
An object $I$ is said to be a separator in the case that the morphisms $a\:I \to X$ are jointly epic, for all objects $X$. It is said to be a \emph{monoidal separator} in the case that the morphisms $a \tensor b\: I \tensor I \to X \tensor Y$ are jointly epic, for all objects $X$ and $Y$. Axiom~G refers to this property. For further glosses of these axioms, see section~\ref{dagger categories}.

The shared axioms A--G are almost sufficient to axiomatize both
$\cat{Rel}$ and $\cat{Hilb}_\FF$:
\begin{theorem}\label{A}
Let $(\cat{C}, \tensor, I, \dag)$ be a dagger symmetric monoidal category that satisfies axioms A--G. Then,
\begin{enumerate}[ (i)]
\item $(\cat{C}, \tensor, I, \dag)$ is equivalent to $(\cat{Rel}, \times, \{\ast\}, \dag)$ if and only if every object has a dagger dual and every family of objects has a coproduct whose inclusions are pairwise-orthogonal dagger kernels;
\item $(\cat{C}, \tensor, I, \dag)$ is equivalent to $(\cat{Hilb_\FF}, \tensor, \FF^{1}, \dag)$ for $\FF = \RR$ or $\FF = \CC$ if and only if every dagger monomorphism is a dagger kernel and the wide subcategory of dagger kernels has directed colimits.
\end{enumerate}
\end{theorem}

\noindent This pair of equivalences provides a category-theoretic perspective on the analogy that is sometimes drawn between sets and Hilbert spaces \cite{Weaver}.

Dagger categories have been considered for more than half of a century \cite{BrinkmannPuppe}*{Definition~6.4.1}. Interest in dagger categories in the context of categorical quantum information theory began with \cite{AbramskyCoecke}. The term originates in \cite{Selinger}. The axiomatizations of $\cat{Hilb}_\RR$ and $\cat{Hilb}_\CC$ in \cite{HeunenKornell} derive from Sol\`{e}r's theorem \cite{Soler}. Axiomatizations of $\cat{Con}_\RR$ and $\cat{Con}_\CC$, the categories of Hilbert spaces and contractions, have also been obtained \cite{HeunenKornellVanDerSchaaf}.

The classic work of Lawvere provides axioms for the category $\cat{Set}$ of sets and functions \cite{Lawvere}. The close relationship between $\cat{Set}$ and $\cat{Rel}$ and the similarity between Lawvere's assumption of limits and our assumption of biproducts naturally invite a comparison between \cite{Lawvere}*{Corollary} and Theorem~\ref{V}. Unlike Lawvere, we have not chosen our axioms to provide a foundation for mathematics but rather to draw a comparison between the category $\cat{Rel}$ and the categories $\cat{Hilb}_\FF$, as in Theorem~\ref{A}. Less directly, our assumptions about dagger kernels derive from \cite{Selinger2}, \cite{Vicary}, and \cite{HeunenJacobs}, and even less directly, they derive from elementary results on abelian categories \cite{MacLane}. Nevertheless, we refer the reader to Corollary~\ref{W}.

Lawvere's axiomatization of $\cat{Set}$ can be transformed into an axiomatization of $\cat{Rel}$ as an allegory \cite{FreydScedrov}*{2.132}. An allegory is a dagger category that is enriched over posets with meets and that satisfies the \emph{law of modularity}: $t \wedge (s \circ r) \leq s \circ ((s^\dag \circ t) \wedge r)$ \cite{FreydScedrov}*{2.11}. The resulting axiomatization of $\cat{A} = \cat{Rel}$ asserts that $\mathcal{Map}(\cat{A})$ satisfies Lawvere's axioms and that $\mathcal{Rel}(\mathcal{Map}(\cat{A})) = \cat{A}$ in the sense that $(f,g) \mapsto g \circ f^\dag$ defines an equivalence of categories \cite{FreydScedrov}*{1.56}. The novelty of Theorem~\ref{A}(i) relative to this older axiomatization of $\cat{Rel}$ is that every axiom except one is also satisfied by $\cat{FinHilb}_\FF$ and that enrichment over posets is proved rather than assumed.

\section{Dagger symmetric monoidal categories}\label{dagger categories}

This section explains the terms in Theorem~\ref{A}; the reader may also wish to consult \cite{HeunenVicary}. Throughout, we illustrate the terms using $\cat{Rel}$ and $\cat{Hilb}_\FF$, where $\FF = \RR$ or $\FF = \CC$. In $\cat{Rel}$, an object is a set, and a morphism $r\:X \to Y$ is a subset $r \subseteq X \times Y$. We say that $r$ is a relation from $X$ to $Y$, and when $(x,y) \in r$, we say that $r$ relates $x$ to $y$. For $r\:X \to Y$ and $s\: Y \to Z$, the composition $s \circ r$ relates $x$ to $z$ if there exists $y \in Y$ such that $r$ relates $x$ to $y$ and $s$ relates $y$ to $z$. In $\cat{Hilb}_\FF$, an object is a Hilbert space over $\FF$, and a morphism $r\: X \to Y$ is a bounded linear operator.

A \emph{dagger category} is commonly defined to be a category $\cat{C}$ with a contravariant functor $(-)^\dag$ that is identity on objects and an involution on morphisms. However, this definition is incompatible with the principle of equivalence \cite{Karvonen}*{section~3.1}, and for this reason, the definition given in section~\ref{introduction} may be preferred. Thus, dagger categories are viewed as a variant notion of categories, rather than as categories that are equipped with additional structure. In $\cat{Rel}$, the dagger of $r\: X \to Y$ is the converse relation $r^\dag$ that relates $y$ to $x$ if $r$ relates $x$ to $y$. In $\cat{Hilb}_\FF$, the dagger of $r\:X \to Y$ is the Hermitian adjoint operator $r^\dag$, which is defined by $\langle r^\dag y | x \rangle = \langle y | rx \rangle$ for all $x \in X$ and $y \in Y$.

A number of basic concepts for categories have canonical analogs for dagger categories \cites{AbramskyCoecke2,Karvonen}. Prominently, a \emph{dagger isomorphism} is a morphism $u\: X \to Y$ such that $u^\dag \circ u = \id_X$ and $u \circ u^\dag = \id_Y$. In $\cat{Rel}$, the dagger isomorphisms are the bijections, and in $\cat{Hilb}_\FF$, the dagger isomorphisms are the unitary operators. Dagger isomorphisms replace isomorphisms in a variety of familiar contexts. For example, a \emph{dagger equivalence} consists of dagger functors $F \: \cat{C} \to \cat{D}$ and $G\: \cat{D} \to \cat{C}$ with natural dagger isomorphisms $G \circ F \iso \id_{\cat{C}}$ and $F \circ G \iso \id_{\cat{D}}$. A \emph{dagger functor} is, of course, a functor $F$ such that $F(r^\dag) = F(r)^\dag$ for all morphisms $r$.

A \emph{dagger symmetric monoidal category} is a dagger category $\cat{C}$ that is equipped with symmetric monoidal structure that is compatible with the dagger operation in two ways: first, $(r \tensor s)^\dag = r^\dag \tensor s^\dag$ for all morphisms $r$ and $s$, and second, the associators, braidings, and unitors are all dagger isomorphisms. Both $\cat{Rel}$ and $\cat{Hilb}_\FF$ are canonically dagger symmetric monoidal categories. In $\cat{Rel}$, the product $X \tensor Y$ is the Cartesian product of $X$ and $Y$, and $r \tensor s$ relates $(x_1, y_1)$ to $(x_2, y_2)$ if $r$ relates $x_1$ to $x_2$ and $s$ relates $y_1$ to $y_2$. The monoidal unit is a chosen singleton $\{\ast\}$. In $\cat{Hilb}_\FF$, the product $X \tensor Y$ is the tensor product of $X$ and $Y$, and $r \tensor s$ maps $x \tensor y$ to $rx \tensor sy$. The monoidal unit is the Hilbert space $\FF^1$.

The dagger symmetric monoidal categories $\cat{Rel}$ and $\cat{Hilb}_\FF$ both satisfy the following axioms:
\begin{enumerate}[ (A)]
\item There is a \emph{zero object}: there exists an object that is both terminal and initial. In $\cat{Rel}$, the zero object is the empty set. In $\cat{Hilb}_\FF$, a zero object is a zero-dimensional Hilbert space, e.g., $\FF^0$. A \emph{zero morphism} is a morphism that factors through a zero object. For all objects $X$ and $Y$, there is a unique zero morphism $0_{X, Y}\: X \to Y$. In $\cat{Rel}$, a zero morphism is an empty relation. In $\cat{Hilb}_\FF$, a zero morphism is a zero operator.
\item Each morphism has a kernel that is a dagger kernel: for each morphism $r\: X \to Y$, there is a morphism $m\: A \to X$ such that $m^\dag \circ m = \id_A$, such that $r \circ m = 0_{A,Y}$, and such that each morphism $s\: Z \to X$ factors uniquely through $m$ if $r \circ s = 0_{Z,Y}$.
$$
\begin{tikzcd}
A
\arrow{r}{m}
&
X
\arrow{r}{r}
&
Y
\\
Z
\arrow{ur}{s}
\arrow{urr}[swap]{0_{Z,Y}}
\arrow[dotted]{u}{!}
&
&
\end{tikzcd}
$$
In this case, $m$ is said to be a \emph{dagger kernel} of $r$ or just a \emph{dagger kernel}. In $\cat{Rel}$, the dagger kernels are exactly the injections, and for each relation $r\: X \to Y$, the inclusion function of the subset $A = \{x \in X \suchthat (x, y) \not \in r \text{ for all } y \in Y\}$ is a dagger kernel of $r$. In $\cat{Hilb}_\FF$, the dagger kernels are exactly the isometries, and for each operator $r\: X \to Y$, the incusion operator of the subspace $A = \{x \in X \suchthat rx = 0\}$ is a dagger kernel of $r$.

A \emph{complement} of a dagger kernel $m$ is a dagger kernel of $m^\dag$. By \cite{HeunenJacobs}*{Lemma~1}, if $n$ is a complement of $m$, then $m$ is a complement of $n$. In this case, we say that $m$ and $n$ are \emph{complementary} dagger kernels. In $\cat{Rel}$, two injections $m\: A \to X$ and $n\: B \to X$ are complementary iff the range of $m$ and the range of $n$ are complements as subsets of $X$. In $\cat{Hilb}_\FF$, two isometries $m\: A \to X$ and $n\: B \to X$ are complementary iff the range of $m$ and the range of $n$ are orthogonal complements as subspaces of $X$.
\item Each pair of complementary dagger kernels is jointly epic: for all dagger kernels $m\: A \to X$ and $n\: B \to X$ and all morphism $r_1, r_2\: X \to Y$, if $r_1 \circ m = r_2 \circ m$, $r_1 \circ n = r_2 \circ n$, and $n$ is a complement of $m$, then $r_1 = r_2$. In $\cat{Rel}$, each pair of complementary injections, $m$ and $n$, is jointly epic because the union of their ranges is $X$. In $\cat{Hilb}_\FF$, each pair of complementary isometries, $m$ and $n$, is jointly epic because the union of their ranges spans $X$.
\item Each pair of objects has a coproduct whose inclusions are orthogonal dagger kernels: for all objects $X$ and $Y$, there exist an object $W$ and dagger kernels $i\: X \to W$ and $j\: Y \to W$ such that $j^\dag \circ i = 0_{X,Y}$ and such that each pair of morphisms $r\: X \to Z$ and $s\: Y \to Z$ factors uniquely through $i$ and $j$.
$$
\begin{tikzcd}
X
\arrow{r}{i}
\arrow{rd}[swap]{r}
&
W
\arrow[dotted]{d}{!}
&
Y
\arrow{l}[swap]{j}
\arrow{ld}{s}
\\
&
Z
&
\end{tikzcd}
$$
In this case the object $W$, together with the morphisms $i$ and $j$, is said to be a \emph{dagger biproduct} of $X$ and $Y$. It is both their product and their coproduct. In $\cat{Rel}$, a dagger biproduct of sets $X$ and $Y$ is a disjoint union of $X$ and $Y$, and in $\cat{Hilb}_\FF$, a dagger biproduct of Hilbert spaces $X$ and $Y$ is a direct sum of $X$ and $Y$.
\item The monoidal unit is not a zero object: $I$ is neither initial nor terminal. In $\cat{Rel}$, we have that $\{\ast\} \not \iso \emptyset$ because cardinality is an isomorphism invariant. In $\cat{Hilb}_\FF$, we have that $\FF^1 \not \iso \FF^0$ because dimension is an isomorphism invariant.
\item Each nonzero endomorphism of the monoidal unit is invertible: every nonzero morphism $a\: I \to I$ has an inverse. In $\cat{Rel}$, there is exactly one nonzero endomorphism of the monoidal unit $I = \{\ast\}$. In $\cat{Hilb}_\FF$, the nonzero endomorphisms of the monoidal unit $I = \FF^1$ are exactly the nonzero elements of $\FF$, which form a group because $\FF$ is a field.
\item The monoidal unit is a monoidal separator: for all distinct $r_1, r_2\: X \tensor Y \to Z$, there exist $a\: I \to X$ and $b\: I \to Y$ such that $f_1 \circ (a \tensor b) \neq f_2\circ (a \tensor b)$. In $\cat{Rel}$, morphisms $\{\ast\} \to X$ for a set $X$ correspond exactly to the elements of $X$, so the monoidal unit is a monoidal separator because the Cartesian product $X \tensor Y$ consists of pairs of elements. In $\cat{Hilb}_\FF$, morphisms $\FF^1 \to X$ correspond exactly to the vectors in $X$, so the monoidal unit is a monoidal separator because elementary tensors span the tensor product $X \tensor Y$.
\end{enumerate}

Axioms A--G prescribe the existence of various objects and morphisms that need not be unique. For convenience, we introduce notations for specific choices of these objects and morphisms. In $\cat{Rel}$, we write $0 = \emptyset$, and in $\cat{Hilb}_\FF$, we write $0 = \FF^0$. In $\cat{Rel}$, we write $X \oplus Y$ for the standard disjoint union of sets $X$ and $Y$, and in $\cat{Hilb}_\FF$, we write $X \oplus Y$ for the standard direct sum of Hilbert spaces $X$ and $Y$. In $\cat{Rel}$, we write $\ker(r)$ for the inclusion function that is a dagger kernel of a relation $r\: X \to Y$, and in $\cat{Hilb}_\FF$, we write $\ker(r)$ for the inclusion operator that is a dagger kernel of an operator $r\: X \to Y$.

In an arbitrary dagger symmetric monoidal category that satisfies axioms A--G, it may not be possible to choose a dagger kernel $m = \ker(r)$ for each morphism $r\: X \to Y$, and it may not be possible to choose a dagger biproduct $W = X \oplus Y$ for each pair of objects $X$ and $Y$, because the objects of the category may form a proper class. In this case, we make such choices only as necessary. We can always choose dagger kernels so that $\ker(r_1) = \ker(r_2)$ whenever $\ker(r_1)$ and $\ker(r_2)$ represent the same subobject of $X$, i.e., whenever $r_2 = r_1 \circ i$ for some isomorphism $i$. Our choice of canonical dagger kernels in $\cat{Rel}$ and in $\cat{Hilb}_\FF$ follows this convention, and it significantly reduces clutter.

The dagger symmetric monoidal category $\cat{FinHilb}_\FF$ of finite-dimensional Hilbert satisfies axioms A--G as well, and in some respects, it occupies a middle ground between $\cat{Rel}$ and $\cat{Hilb}_\FF$. For example, in both $\cat{FinHilb}_\FF$ and $\cat{Hilb}_\FF$, every dagger monomorphism is a dagger kernel, i.e., every morphism $m\: A \to X$ that satisfies $m^\dag \circ m = \id_A$ is the kernel of some morphism $r\:X \to Y$. This does not occur in $\cat{Rel}$. For example, the relation $f^\dag: A \to X$ is a dagger monomorphism but not a dagger kernel when $A$ is a singleton, $X$ is a pair, and $f\: X \to A$ is a function. On the other hand, in both $\cat{FinHilb}_\FF$ and $\cat{Rel}$, every object has a dagger dual, but this does not occur in $\cat{Hilb}_\FF$. We review dagger duals now.

Let $\alpha$, $\beta$, and $\gamma$, denote the associator, braiding, and left unitor of a dagger symmetric monoidal category, respectively. A \emph{dagger dual} of an object $X$ is an object $X^*$ together with a morphism $\eta_X\: I \to X^* \tensor X$ such that
$$
\bar \gamma_X \circ (\id_X \tensor \eta_X^\dagger) \circ \alpha_{X, X^*, X} \circ (\eta_{X^*} \tensor \id_X) \circ \gamma_X^\dagger = \id_X
$$
$$
\bar \gamma_{X^*} \circ (\id_{X^*} \tensor \eta_{X^*}^\dagger) \circ \alpha_{X^*, X, X^*} \circ (\eta_X \tensor \id_{X^*}) \circ \gamma_{X^*}^\dagger = \id_{X^*},
$$
where $\bar \gamma_X = \gamma _X \circ \beta_{X, I}$, $\bar \gamma_{X^*} = \gamma_{X^*} \circ \beta_{X^*, I}$, and $\eta_{X^*} = \beta_{X^*,X} \circ \eta_X$. In this case, the object $X$ together with the morphism $\eta_{X^*}\: I \to X \tensor X^*$ is a dagger dual of $X^*$ as well. Of course, a dagger dual of $X$ is also a dual of $X$ in the standard sense \cite{MacLane2}. A dagger symmetric monoidal category in which every object has a dagger dual has been called a strongly compact closed category \cite{AbramskyCoecke2} and then a \emph{dagger compact closed category} \cite{Selinger}.

In $\cat{Rel}$, the dagger dual of a set $X$ is the same set $X^* = X$ together with the relation $\eta_X$ that relates the unique element of the monoidal unit to all pairs of the form $(x,x)$ for $x \in X$. In $\cat{FinHilb}_\FF$, the dagger dual of a finite-dimensional Hilbert space $X$ is the conjugate Hilbert space $X^* = \overline X$ together with the operator $\eta_X\: 1 \mapsto \sum_{i\in M} \overline{e_{i}} \tensor e_i$, where $\{e_i \suchthat i \in N\}$ is any orthonormal basis of $X$ and $\{\overline{e_{i}} \suchthat i \in N\}$ is the corresponding orthonormal basis of $\overline X$. In an arbitrary dagger compact closed category, it may not be possible to choose a dagger dual $X^*$ for each object $X$ because the objects of the category may form a proper class.
In this case, we make such choices only as necessary. We can always choose dagger duals so that $X^{**} = X$. Our choice of canonical dagger duals in $\cat{Rel}$ and in $\cat{FinHilb}_\FF$ follows this convention. In $\cat{Hilb}_\FF$, no infinite-dimensional Hilbert space has a dagger dual \cite{HeunenVicary}*{example~3.2}.

\section{Infinite biproducts and complete semirings}\label{complete semirings}

The biproduct $\oplus$ is classically defined in the setting of abelian categories \cite{MacLane2}. In any abelian category, we have that $f + g = \nabla_Y \circ (f \oplus g) \circ \Delta_X$, where $\Delta_X\: X \to X \oplus X$ and $\nabla_Y\: Y \oplus Y \to Y$ are the diagonal and the codiagonal morphisms, respectively. This equation provides a bridge to an alternative definition of abelian categories, in which no enrichment is assumed \cites{Freyd, Schubert}. In this context, a \emph{biproduct} of objects $X$ and $Y$ is an object $X \oplus Y$ together with ``projections'' $p\: X \oplus Y \to X$ and $q\: X \oplus Y \to Y$ and ``inclusions'' $i\: X \to X \oplus Y$ and $j\: Y \to X \oplus Y$ such that $(X \oplus Y, p, q)$ is a product, such that $(X \oplus Y, i, j)$ is a coproduct, and such that $p \circ i = \id_X$, $q \circ j = \id_Y$, $q \circ i = 0_{X,Y}$, and $p \circ j = 0_{Y,X}$.

Neither $\cat{Rel}$ nor $\cat{Hilb_\FF}$ are abelian categories. Fortunately, biproducts yield a canonical enrichment over commutative monoids in a more general setting that includes both of these categories \cite{MacLane}*{section 19}. In $\cat{Rel}$, each infinite family of objects has a biproduct, and this property distinguishes $\cat{Rel}$ from $\cat{Hilb}_\FF$. This means that for any family of objects $\{X_\alpha\}_{\alpha \in M}$, there exists an object $X = \bigoplus_{\alpha \in M} X_\alpha$ together with ``projections'' $p_\alpha \: X \to X_\alpha$ that make $X$ a product and ``inclusions'' $i_\alpha\: X_\alpha \to X$ that make $X$ a coproduct such that $p_\alpha \circ i_\alpha = \id_{X_\alpha}$ and otherwise $p_\alpha \circ i_{\beta} = 0_{X_\beta,X_\alpha}$. These biproducts yield a canonical enrichment over complete monoids in a straightforward generalization of the finite case; see \cite{Laird}*{Proposition~2.3} and \cite{Haghverdi}*{Theorem 3.0.17}.

A complete monoid is an abelian monoid in which one can form the sum of any indexed family of elements. For each set $R$, let $\Fam(R)$ be the class of all indexed families of elements of $R$. Formally, a \emph{complete monoid} is a set $R$ together with an operation $\Sigma\: \Fam(R) \to R$ that maps singleton families to their elements and that satisfies the associativity condition
$$
\sum_{\alpha \in M} r_\alpha = \sum_{\beta \in N} \sum_{\alpha \in f^{-1}(\beta)} r_\alpha
$$
for every function $f\: M \to N$ \cites{Haghverdi2,Laird}. If $\{r_\alpha\}_{\alpha \in M}$ is a family of morphisms $X \to Y$ in a category with biproducts for all indexed families of objects, then
$$\sum_{\alpha \in M} r_\alpha := \nabla \circ \left( \bigoplus_{\alpha \in M} r_\alpha \right) \circ \Delta,$$
where $\Delta\: X \to \bigoplus_{\alpha \in M} X$ is the diagonal map and $\nabla\: \bigoplus_{\alpha \in M} Y \to Y$ is the codiagonal map.

Thus, for each object $X$ in a category with all biproducts, the set of all morphisms $X \to X$ is both a complete monoid with respect to the operation $\sum$ and a monoid with respect to the operation $\circ$. Enrichment over complete monoids implies that the latter operation distributes over the former operation in the sense that 
$$
\sum_{\alpha \in M} s \circ r_\alpha = s \circ \left( \sum_{\alpha \in M} r_\alpha \right),
\qquad \qquad
\sum_{\alpha \in M} r_\alpha \circ s = \left( \sum_{\alpha \in M} r_\alpha \right) \circ s.
$$
In other words, the endomorphisms of $X$ form a \emph{complete semiring} \cites{Golan, Laird}.

In the setting of dagger categories, a \emph{dagger biproduct} of an indexed family $\{X_\alpha\}_{\alpha \in M}$ is a biproduct $\bigoplus_{\alpha \in M} X_\alpha$ such that $p_\alpha = i_\alpha^\dag$ for each $\alpha \in M$. It follows that the morphisms $i_\alpha$ are dagger monomorphisms in the sense that $i_\alpha^\dag \circ i_\alpha = \id_{X_\alpha}$ and that they are pairwise-orthogonal in the sense that $i_\alpha^\dag \circ i_\beta = 0_{X_\beta, X_\alpha}$ for $\alpha \neq \beta$. Thus, the existence of dagger biproducts for all indexed families of objects implies axiom D. In the case of dagger biproducts, the diagonal map $\Delta\: X \to \bigoplus_{\alpha \in M} X$ and the codiagonal map $\nabla\: \bigoplus_{\alpha \in M} X \to X$ are related by $\nabla = \Delta^\dag$. It follows that in a dagger category with all dagger biproducts, the endomorphisms of $X$ form a complete semiring with sums that are defined by
$$
\sum_{\alpha \in M} r_\alpha := \Delta^\dag \circ \left( \bigoplus_{\alpha \in M} r_\alpha \right) \circ \Delta.
$$
The operation $\dag$ is an involution that satisfies $(r \circ s)^\dag = s^\dag \circ r^\dag$ and $\left( \sum_{\alpha \in M} r_\alpha \right)^\dag = \sum_{\alpha \in M} r_\alpha^\dag$.

\section{Complete Boolean algebras}\label{part1}

Let $(\cat{C}, \tensor, I, \dag)$ be a dagger symmetric monoidal category with dagger biproducts for all families of objects. Assume that every morphism has a kernel that is dagger monic and that $k$ and $k^\perp: = \ker(k^\dagger)$ are \emph{jointly epic} for every dagger kernel $k$. The latter condition means that $f = g$ whenever $f \circ k = g \circ k$ and $f \circ k^\perp = g \circ k^\perp$. Further, assume that $I$ is a separator, that $I$ is nonzero, and that all nonzero morphisms $I \to I$ are invertible.
In this section, we show that for each object $X$, morphisms $I \to X$ form a complete Boolean algebra. First, we use an Eilenberg swindle to show that the scalars of $\cat{C}$ must be the Boolean algebra $\{0,1\}$.

\begin{lemma}\label{C}
Let $(R, \Sigma, \cdot)$ be a complete semiring, and let $R^\times = R \setminus \{0\}$. If $(R^\times, \,\cdot\,)$ is a group, then $R^\times = \{1\}$, and $1 + 1 = 1$.
\end{lemma}

\begin{proof}
Let $\omega = 1 + 1 + \cdots$. Clearly $\omega + \omega = \omega$. Furthermore, $\omega \neq 0$, because equality would imply that $0 = \omega = \omega + 1 = 0 + 1 = 1$. We now calculate that $1 + 1 = \omega\inv \cdot \omega + \omega\inv \cdot \omega = \omega\inv \cdot (\omega + \omega) = \omega\inv \cdot \omega = 1$. Thus, $r + r = r$ for all $r \in R$, and $R$ is a join semilattice with $r \vee s = r + s$; we define $r \leq s$ if $r + s = s$ \cite{CliffordPreston}*{Theorem~1.12}.

By distributivity, $R^\times$ is a partially ordered group. Furthermore, it has a maximum element $m : = \sum_{r \in R} r$. We now calculate, for all $r \in R^\times$, that $r = r \cdot 1 = r \cdot m \cdot m\inv \leq m \cdot m \inv = 1$. This implies that $R^\times$ is trivial because $1 = r \cdot r\inv \leq r \cdot 1 = r \leq 1$ for all $r \in R^\times$.
\end{proof}

For all objects $X$ and $Y$, let $0_{X,Y}$ be the unique morphism $X \to Y$ that factors through $0$.

\begin{proposition}\label{D}
The two endomorphisms of $I$ are $0 := 0_{I, I}$ and $1:= \id_{I}$, and $1+1 = 1$.
\end{proposition}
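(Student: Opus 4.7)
The plan is to apply Lemma~\ref{C} to the ``rig'' $R = \cat{C}(I, I)$ of scalars. First, I would assemble the infinitary rig structure on $R$: the addition $\Sigma$ comes from the enrichment of $\cat{C}$ over commutative infinitary monoids via biproducts, as recalled at the end of Section~\ref{biproducts}, and the multiplication $\cdot$ is composition with unit $\id_I$. The distributivity axiom (3) of Lemma~\ref{C} follows from the formula $\sum_\alpha r_\alpha = \Delta^\dagger \circ (\bigoplus_\alpha r_\alpha) \circ \Delta$ together with naturality of $\Delta$, as part of the routine enrichment bookkeeping.

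Next, I would check that $R$ is an infinitary division rig, i.e., that $R \setminus \{0\}$ is a group under composition. By hypothesis, every nonzero scalar is invertible in $\cat{C}$, hence has a two-sided inverse in $R$. For closure of $R \setminus \{0\}$ under composition, I would apply distributivity to the empty sum to deduce that $a \cdot 0 = 0$ for every $a \in R$: if $a \cdot b = 0$ with $a \neq 0$, then $b = a\inv \cdot a \cdot b = a\inv \cdot 0 = 0$, contradicting $b \neq 0$.

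Lemma~\ref{C} then yields $R^\times = \{1\}$ and $1 + 1 = 1$. Combined with the hypothesis that every nonzero scalar is invertible, this gives $R = \{0, 1\}$. It remains only to rule out $0 = 1$, which would force $\id_I = 0_{I,I}$ to factor through the zero object, making $I$ itself a zero object and contradicting the assumption that $I$ is nonzero. I anticipate no real obstacle beyond the mild bookkeeping verifying that distributivity extends to the empty sum; the conceptual work is all carried out in Lemma~\ref{C}.
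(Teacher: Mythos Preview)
Your proposal is correct and follows essentially the same route as the paper: equip $\cat{C}(I,I)$ with the infinitary rig structure coming from biproducts and composition, invoke the standing hypothesis that nonzero scalars are invertible, and apply Lemma~\ref{C}. Your additional care in verifying closure of $R\setminus\{0\}$ under composition and in ruling out $0=1$ via the nonzero-$I$ hypothesis are details the paper leaves implicit, but they do not constitute a different argument.
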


\begin{proof}
The endomorphism set $\cat{C}(I, I)$ is a complete semiring for the operations
$$
\sum_{\alpha \in M} r_\alpha := \Delta^\dag \circ \left(\bigoplus_{\alpha \in M} r_\alpha \right) \circ \Delta, \qquad \qquad r \cdot s := r \circ s;$$
see section~\ref{complete semirings}. The multiplicative identity $1$ is nonzero because $I$ is nonzero by assumption, and the nonzero elements of $\cat{C}(I, I)$ are invertible by assumption. Therefore, by Lemma~\ref{C}, the only nonzero element of $\cat{C}(I,I)$ is the identity $1$, and $1+1 = 1$.
\end{proof}

\begin{proposition}\label{E}
Let $X$ and $Y$ be objects of $\cat{C}$. We can partially order the morphisms $X \to Y$ by $r \leq s$ if $r + s = s$. Then, $\cat{C}(X, Y)$ is a complete lattice with $\bigvee_{\alpha \in M} r_\alpha = \sum_{\alpha \in M} r_\alpha$.
\end{proposition}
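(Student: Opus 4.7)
The plan is to first promote the scalar identity $1 + 1 = 1$ of Proposition~\ref{D} to idempotence $r + r = r$ for every morphism $r$, which makes $\leq$ a partial order with $r \vee s = r + s$. Then I will extend binary joins to arbitrary joins by showing that the infinitary sum itself is the join, giving a complete lattice.

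For idempotence, first compute $a + a = a \circ 1 + a \circ 1 = a \circ (1 + 1) = a$ for any $a\: I \to Y$, using that composition is bilinear over $+$ in the biproduct setting. Then for any $r\: X \to Y$ and any $a\: I \to X$, $(r + r) \circ a = r \circ a + r \circ a = r \circ a$; since $I$ is a separator, $r + r = r$. The partial order axioms for $\leq$ are then routine: reflexivity is idempotence, antisymmetry uses commutativity of $+$, and transitivity follows from $r + t = r + (s + t) = (r + s) + t = s + t = t$.

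The central technical step, which I expect to be the main obstacle, is the constant-sum identity $\sum_{\alpha \in M} s = s$ for nonempty $M$ and arbitrary $s\: X \to Y$. I will bootstrap from scalars: by appending an extra index to $\{1\}_{\alpha \in M}$ and applying associativity with $1 + 1 = 1$ on the enlarged fiber over some $\beta \in M$, I get $\omega + 1 = \omega$ for $\omega := \sum_{\alpha \in M} 1$. Hence $\omega \neq 0$, since otherwise $1 = 0 + 1 = \omega + 1 = \omega = 0$ would contradict Axiom~E, and Proposition~\ref{D} forces $\omega = 1$. Bilinearity then gives $\sum_\alpha a = a \circ \sum_\alpha 1 = a$ for $a\: I \to Y$, and the separator property propagates this to $\sum_\alpha s = s$ for general $s\: X \to Y$.

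With this lemma in hand, verifying that $\sum_\alpha r_\alpha$ is the join is direct. The same insertion-and-associativity trick (now with $r_\beta + r_\beta = r_\beta$ on the fiber over $\beta$) shows $r_\beta + \sum_\alpha r_\alpha = \sum_\alpha r_\alpha$, so $\sum_\alpha r_\alpha$ is an upper bound. If $r_\alpha \leq s$ for every $\alpha$, then $\sum_\alpha r_\alpha + s = \sum_\alpha r_\alpha + \sum_\alpha s = \sum_\alpha (r_\alpha + s) = \sum_\alpha s = s$, by bilinearity, the hypothesis $r_\alpha + s = s$, and the constant-sum identity. The empty sum gives the bottom $0$, so all joins exist, and $\cat{C}(X, Y)$ is a complete lattice.
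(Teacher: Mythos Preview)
Your proof is correct and follows essentially the same route as the paper's: establish idempotence via the separator property and $1+1=1$, prove the constant-sum identity $\sum_\alpha s = s$ by first reducing to scalars, and then verify that the infinitary sum is the least upper bound. The only difference is that you spell out explicitly (via the insertion-and-associativity trick) the two steps the paper dismisses as ``clearly'': that $\sum_{\alpha \in M} 1 \geq 1$ in $\cat{C}(I,I)$, and that $\sum_\alpha r_\alpha$ is an upper bound for each $r_\beta$.
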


\begin{proof}
For all $a\: I \to X$ and all $r \: X \to Y$, we calculate that
\begin{align*}&
(r + r) \circ a = r \circ a + r \circ a = r \circ a \circ 1 + r \circ a \circ 1 = r \circ a \circ (1 + 1) = r \circ a \circ 1 = r \circ a.
\end{align*}
Since $I$ is a separator, we conclude that $r+r = r$ for all $r\: X \to Y$. Hence, $\cat{C}(X, Y)$ is an idempotent commutative monoid. Therefore, it is a poset with the given order, and moreover, $r_1 + r_2$ is the join of morphisms $r_1, r_2\: X \to Y$.

The same reasoning is sound for infinitely many summands. For all morphisms $a\: I \to X$ and $r\: X \to Y$ and all nonempty sets $M$, we calculate that
\begin{align*}&
\left(\sum_{\alpha \in M} r\right) \circ a = \sum_{\alpha \in M} r \circ a = \sum_{\alpha \in M} r \circ a \circ 1 = r \circ a \circ \sum_{\alpha \in M} 1 = r \circ a \circ 1 = r \circ a,
\end{align*}
where $\sum_{\alpha \in M} 1 = 1$ because $\sum_{\alpha \in M} 1$ is clearly an upper bound for $1$ in $\cat{C}(I,I)$. Therefore, $\sum_{\alpha \in M} r = r$ for all morphisms $r\: X \to Y$ and all nonempty sets $M$.

Let $\{r_\alpha\}_{\alpha \in M}$ be any nonempty indexed family of morphisms $X \to Y$. The sum $\sum_{\alpha \in M} r_\alpha$ is clearly an upper bound. Let $s$ be another upper bound. Then, $r_\alpha + s = s$ for all $\alpha \in M$, and hence
$$
s = \sum_{\alpha \in M} s = \sum_{\alpha \in M} (r_\alpha + s) = \sum_{\alpha \in M} r_\alpha + \sum_{\alpha \in M} s = \left( \sum_{\alpha \in M} r_\alpha \right) + s.
$$
We conclude that $\sum_{\alpha \in M} r_\alpha \leq s$ and, more generally, that $\sum_{\alpha \in M} r_\alpha$ is the least upper bound of $\{r_\alpha\}_{\alpha \in M}$. Therefore, $\cat{C}(X, Y)$ is a complete lattice with $\bigvee_{\alpha \in M} r_\alpha = \sum_{\alpha \in M} r_\alpha$.
\end{proof}

\begin{definition}\label{F}
For each object $X$, let $\top_X$ be the maximum morphism $I \to X$, i.e., let
$$
\top_X  = \sum_{a \: I \to X} a,
$$
and let $0_X$ be the minimum morphism $I \to X$, i.e., let $0_X = 0_{I,X}$.
\end{definition}

We will soon show that the $\ker(\top_X^\dag)$ is zero. To avoid clutter, we choose a representative for each isomorphism class of dagger kernels into $X$, so that for all morphisms $r$ and $s$ out of $X$, the kernels $\ker(r)$ and $\ker(s)$ are uniquely defined and furthermore $\ker(r) = \ker(s)$ whenever $\ker(r) \iso \ker(s)$. If the objects of $\cat{C}$ form a proper class, and if our foundations do not allow us to choose representative dagger kernels for each of them, then we make such choices only as necessary.   

\begin{proposition}\label{G}
Let $r\: X \to Y$. We have that  $r=0_{X,Y}$ if and only if $r \circ \top_X = 0_{Y}$. Furthermore, $\coker(r) = \coker(r \circ \top_X)$.
\end{proposition}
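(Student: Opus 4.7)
The plan is to prove the biconditional by combining the separator property of $I$ with the order structure from Proposition~\ref{E}, and then to deduce the cokernel identity as an immediate corollary of that biconditional applied to composites.

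First I would observe that $0_{X,Y}$ is the minimum element of the poset $\cat{C}(X,Y)$ of Proposition~\ref{E}. Under the biproduct-induced enrichment, $0_{X,Y}$ agrees with the empty sum, since the empty biproduct is the zero object. Applying the associativity axiom of Definition~\ref{B} to the inclusion $\{1\} \hookrightarrow \{1,2\}$ shows that the empty sum is a unit for $+$; hence $0_{X,Y} + s = s$, i.e.\ $0_{X,Y} \leq s$, for every $s \in \cat{C}(X,Y)$.

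The forward direction of the first claim is immediate. For the converse, assume $r \circ \top_X = 0_{I,Y}$. Given any $a \: I \to X$, the definition of $\top_X$ yields $a \leq \top_X$, and monotonicity of composition in the right variable (noted immediately after Proposition~\ref{E}) gives $r \circ a \leq r \circ \top_X = 0_{I,Y}$. Combined with the minimality from the previous paragraph, this forces $r \circ a = 0_{I,Y}$. Since $I$ is a separator, $r = 0_{X,Y}$.

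For the cokernel statement, I would apply the biconditional just established to the morphism $f \circ r$ for any $f$ with domain $Y$: we have $f \circ r = 0_{X,Z}$ if and only if $(f \circ r) \circ \top_X = 0_{I,Z}$. Thus $r$ and $r \circ \top_X$ share precisely the same left annihilators, so by the universal property of the cokernel, $\coker(r) = \coker(r \circ \top_X)$. The only substantive point in the whole proof is the identification of $0_{X,Y}$ as the minimum of $\cat{C}(X,Y)$; after that, the argument runs entirely on monotonicity of composition and the separator hypothesis, so I do not anticipate a real obstacle.
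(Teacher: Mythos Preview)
Your proof is correct and follows essentially the same route as the paper: monotonicity of composition combined with the separator property for the biconditional, then the observation that $r$ and $r\circ\top_X$ have identical left annihilators for the cokernel identity. The only cosmetic difference is that you make explicit the minimality of $0_{X,Y}$ (which the paper uses tacitly) and phrase the cokernel step via shared annihilators rather than writing out the two mutual factorizations.
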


\begin{proof}
The forward direction of the equivalence is trivial. For the backward direction, assume that $r \circ \top_X = 0_Y$. By the monotonicity of composition in the second variable, we have that $r \circ a = 0_Y$ for all $a\: I \to X$. Because $I$ is a separator, we conclude that $r = 0$, as desired. Hence, we have proved the equivalence.

To prove the equality, we compare $\coker(r)\: X \to A$ and $\coker(r \circ \top_X)\: X \to B$. We first observe that $\coker(r) \circ r \circ \top_X = 0_A$, so $\coker(r)$ factors through $\coker(r \circ \top_X)$. Next, we observe that $\coker(r \circ \top_X) \circ r \circ \top_X = 0_B$. Via the proved equivalence, we infer that $\coker(r \circ \top_X) \circ r = 0_{X,B}$, so $\coker(r \circ \top_X)$ factors through $\coker(r)$. It follows that $\coker (r)$ and $\coker (r \circ \top_X)$ are equal.
\end{proof}

\begin{definition}\label{H}
For each morphism $a\: I \to X$, let $\Not a$ be the maximum morphism $I \to X$ such that $a^\dag \circ \Not a = 0$. 
\end{definition}

\begin{lemma}\label{I}
Let $a\: I \to X$. Then, $j = \ker(a^\dag)^\perp$ satisfies $a = j \circ \top_{A}$ and $\Not a = j^\perp \circ \top_{A^\perp}$, where $A$ is the domain of $j$ and $A^\perp$ is the domain of $j^\perp$.
\end{lemma}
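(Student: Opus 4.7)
My plan is to first factor $a$ through $j$ and then identify the factor with $\top_A$. The factorization is immediate from the definition of $k := \ker(a^\dagger)$: since $a^\dagger \circ k = 0$, dagger gives $k^\dagger \circ a = 0$, whence $a$ factors through $\ker(k^\dagger) = k^\perp = j$. Dagger-monicity of $j$ makes the factor unique and equal to $b := j^\dagger \circ a$, so the task reduces to proving $b = \top_A$.

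For this, I check that $b^\dagger$ and $\top_A^\dagger$ agree as morphisms $A \to I$ by evaluating on an arbitrary $c\: I \to A$ and invoking the separator hypothesis on $I$. Both $b^\dagger \circ c$ and $\top_A^\dagger \circ c$ lie in $\cat{C}(I,I) = \{0,1\}$ by Proposition~\ref{D}, so it suffices to show each is nonzero exactly when $c$ is. For $\top_A^\dagger \circ c$, this follows from Proposition~\ref{G} applied to $c^\dagger\: A \to I$: $c=0 \Leftrightarrow c^\dagger=0 \Leftrightarrow c^\dagger \circ \top_A = 0 \Leftrightarrow \top_A^\dagger \circ c = 0$. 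For $b^\dagger \circ c$, suppose the scalar vanishes; then
\[
a^\dagger \circ (j \circ c) = b^\dagger \circ j^\dagger \circ j \circ c = b^\dagger \circ c = 0,
\]
so $j \circ c$ factors through $k$ as $j \circ c = k \circ d$, and applying $j^\dagger$ on the left together with $j^\dagger \circ k = 0$ (from the orthogonality of $k^\perp = j$ and $k$) forces $c = 0$. Separation then yields $b^\dagger = \top_A^\dagger$, so $b = \top_A$ and $a = j \circ \top_A$.

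For the second identity, Definition~\ref{H} says $\Not a$ is the maximum $m\: I \to X$ with $a^\dagger \circ m = 0$, equivalently the maximum $m$ factoring through $k = \ker(a^\dagger)$. Since the chosen representatives give $k^{\perp\perp} = k$ and hence $j^\perp = k$, the morphism $j^\perp \circ \top_{A^\perp}$ is such an $m$; moreover it dominates every competitor $m = k \circ m'$, because $m' \leq \top_{A^\perp}$ and composition is monotone in its second argument. The step I expect to be the main obstacle is the implication $b^\dagger \circ c = 0 \Rightarrow c = 0$ above, since it is the only part that really uses how $j$ was built as $k^\perp$; everything else is routine bookkeeping with $\dagger$, Propositions~\ref{D} and~\ref{G}, and the separator axiom.
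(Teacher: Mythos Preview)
Your proof is correct. The second identity is handled exactly as in the paper. For the first identity your organization differs slightly: the paper compares $(j\circ\top_A)^\dagger$ and $a^\dagger$ directly as morphisms $X\to I$, testing against arbitrary $b\:I\to X$ via the chain
\[
(j\circ\top_A)^\dagger\circ b=0 \EV \top_A^\dagger\circ(j^\dagger\circ b)=0 \EV j^\dagger\circ b=0 \EV b\text{ factors through }j^\perp=\ker(a^\dagger) \EV a^\dagger\circ b=0,
\]
whereas you first factor $a=j\circ(j^\dagger\circ a)$ and then test $j^\dagger\circ a$ against $\top_A$ at the level of $A$. The same ingredients (Proposition~\ref{G}, the universal property of $\ker(a^\dagger)$, the orthogonality $j^\dagger\circ k=0$, and the separator axiom) appear in both; your route adds the preliminary factorization step but makes the role of $j$ as the ``image'' of $a$ more explicit, while the paper's route is marginally shorter since it avoids singling out the factor $j^\dagger\circ a$.
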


\begin{proof}
For all $b\:I \to X$, we have the following chain of equivalences:
\begin{align*}
(j \circ \top_A)^\dag \circ b = 0 & \EV \top_A^\dag \circ j^\dag \circ b = 0 \EV j^\dag \circ b = 0 \EV (\exists c)\; b = \ker(j^\dag) \circ c
\\ & \EV (\exists c)\; b = j^\perp \circ c \EV (\exists c)\; b = \ker(a^\dag) \circ c \EV a^\dag \circ b = 0.
\end{align*}
The second equivalence follows by Proposition \ref{G}. The second-to-last equivalence follows by \cite{HeunenJacobs}*{Lemma 3}. Because $I$ is a separator, we conclude that $(j \circ \top_A)^\dag = a^\dag$ or equivalently that $j \circ \top_A = a$.

We prove the equation $\Not a = j^\perp \circ \top_{A^\perp}$ as a pair of inequalities. In one direction, we calculate that $a^\dag \circ j^\perp \circ \top_{A^\perp} = a^\dag \circ \ker(a^\dag) \circ \top_{A^\perp} = 0$, concluding that $j^\perp \circ \top_{A^\perp} \leq \Not a$. In the other direction, we reason that
\begin{align*}&
a^\dag \circ \Not a = 0 \IM (\exists c)\; \Not a = \ker(a^\dag) \circ c = j^\perp \circ c \IM \Not a \leq j^\perp \circ \top_{A^\perp}.
\end{align*}
Therefore, $\Not a = j^\perp \circ \top_{A^\perp}$, as claimed.
\end{proof}

\begin{proposition}\label{J}
For each object $X$, the lattice $\cat{C}(I, X)$ is an ortholattice when it is equipped with the orthocomplement $a \mapsto \Not a$. In other words, $\Not \Not a = a$, $a \wedge \Not a = 0_X$, $a \vee \Not a = \top_X$, and $a \leq b$ implies that $\Not b \leq \Not a$ for all $a, b\: I \to X$.
\end{proposition}

\begin{proof}
The operation $a \mapsto \Not a$ is antitone as an immediate consequence of Definition \ref{H}, and we now show that it is furthermore an order-reversing involution. Let $b = \Not a$. By Lemma~\ref{I}, the morphisms $j = \ker (a^\dag)^\perp\: A \to X$ and $k = \ker(b^\dag)^\perp\: B \to X$ are such that $a = j \circ \top_A$, that $\Not a = j^\perp \circ \top_{A^\perp}$, that $b = k \circ \top_B$, and that $\Not b = k^\perp \circ \top_{B^\perp}$. By Proposition~\ref{G}, $$k = \ker(b^\dag)^\perp = \ker(\top_{A^\perp}^\dag \circ j^{\perp\dag})^\perp = \ker(j^{\perp\dag})^\perp = j^{\perp\perp\perp} = j^\perp.$$ Thus, $k^\perp = j^{\perp \perp} = j$, and $\Not \Not a = \Not b = k^\perp \circ \top_{B^\perp} = j \circ \top_A = a$. Therefore, $a \mapsto \Not a$ is indeed an order-reversing involution. For all $a\: I \to X$, we also have that $(a \And \Not a)^\dag \circ (a \And \Not a) \leq a^\dag \circ \Not a = 0$ and thus that $a \And \Not a = 0_X$. Dually, $a \Or \Not a = \Not \Not a \Or \Not a = \Not (\Not a \And a) = \Not 0_X = \top_X$. Thus, $\Not a$ is a complement of $a$ for all $a \: I \to X$, and therefore, $\cat{C}(I, X)$ is an ortholattice.
\end{proof}

\begin{lemma}\label{K}
Let $j\: A \to X$ be a dagger kernel. Then, $j \circ j^\dag + j^\perp \circ j^{\perp\dag} = \id_X$.
\end{lemma}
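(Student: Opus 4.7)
The plan is to test the desired equation against the jointly epic pair $(j, j^\perp)$, exploiting the standing hypothesis that a dagger kernel and its complement are jointly epic. Set $e := j \circ j^\dagger + j^\perp \circ j^{\perp\dagger}$; it then suffices to verify that $e \circ j = j$ and $e \circ j^\perp = j^\perp$ and to conclude $e = \id_X$ by joint epicness.

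For the first equation I would distribute composition over the sum, which is legitimate because the discussion following Proposition~\ref{E} records that composition preserves sums in each variable. This yields $e \circ j = j \circ (j^\dagger \circ j) + j^\perp \circ (j^{\perp\dagger} \circ j)$. The first inner factor collapses to $\id_A$ because $j$ is dagger monic, and the second inner factor vanishes: by definition $j^\perp = \ker(j^\dagger)$, so $j^\dagger \circ j^\perp = 0$, and taking daggers gives $j^{\perp\dagger} \circ j = 0$. Hence $e \circ j = j + 0 = j$.

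The second identity $e \circ j^\perp = j^\perp$ is entirely symmetric. Here I use that $j^\perp$ is itself a dagger kernel: it is a kernel by construction and is dagger monic by axiom~B, so $j^{\perp\dagger} \circ j^\perp = \id$. The cross term $j \circ (j^\dagger \circ j^\perp)$ vanishes by the defining property of $j^\perp$, leaving $e \circ j^\perp = 0 + j^\perp = j^\perp$. Applying joint epicness of $j$ and $j^\perp$ to the morphisms $e$ and $\id_X$ then closes the argument.

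I do not foresee any real obstacle. The proof is bookkeeping around the two orthogonality relations $j^\dagger \circ j = \id_A$ and $j^\dagger \circ j^\perp = 0$ (and their daggers), combined with distributivity of composition over sums. The only conceptual point is to recognize that the jointly epic pair supplied by the standing hypotheses is exactly the probe needed to promote pointwise agreement to global agreement with the identity.
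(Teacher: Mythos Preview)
Your argument is correct. The paper reaches the same conclusion by a slightly less direct path: it forms the copairing $i = [j, j^\perp]\: A \oplus A^\perp \to X$, checks that $i^\dagger \circ i = \id_{A \oplus A^\perp}$ entrywise (which amounts to exactly the four orthogonality relations you use), invokes joint epicness to conclude that $i$ is epic and hence a dagger isomorphism, and then expands $\id_X = i \circ i^\dagger$ to obtain the sum. Your approach cuts out the intermediate isomorphism and applies joint epicness directly to the pair $(e,\id_X)$, which is cleaner and uses no facts beyond distributivity of composition over $+$ and the defining identities for $j$ and $j^\perp$. The paper's detour does yield the extra datum that $A \oplus A^\perp \cong X$ via a dagger isomorphism, but that isomorphism is not used elsewhere in the paper, so nothing is lost by your shortcut.
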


\begin{proof}
Let $i = [j, j^\perp]\: A \oplus A^\perp \to X$, where the bracket notation refers to the universal property of the coproduct. Let $\inc_1\: A \to A \oplus A^\perp$ and $\inc_2\: A^\perp \to A \oplus A^\perp$ be the coproduct inclusions. We calculate that $\inc_1^\dag \circ i^\dag \circ i \circ \inc_1 = j^\dag \circ j = \id_A = \inc_1^\dag \circ \id_{A \oplus A^\perp} \circ \inc_1$, and similarly, $\inc_2^\dag \circ i^\dag \circ i \circ \inc_2 = \inc_2^\dag \circ \id_{A \oplus A^\perp} \circ \inc_2$. We also calculate that $\inc_1^\dag \circ i^\dag \circ i \circ \inc_2 = j^\dag \circ j^\perp = 0_{A^\perp, A} = \inc_1^\dag \circ \id_{A \oplus A^\perp} \circ \inc_2$, and dually, $\inc_2^\dag \circ i^\dag \circ i \circ \inc_1 = \inc_2^\dag \circ \id_{A \oplus A^\perp} \circ \inc_1$. We conclude that $i^\dag \circ i = \id_{A \oplus A^\perp}$, in other words, that $i$ is dagger monic. It is also epic because $j$ and $j^\perp$ are jointly epic by assumption. Therefore, $i$ is a dagger isomorphism. We now calculate that
$$
\id_X = i \circ i^\dag = [j,j^\perp] \circ [j, j^\perp]^\dag = \nabla_X \circ ( j \oplus j^\perp) \circ (j \oplus j^\perp)^\dag \circ \nabla^\dag_X = j \circ j^\dag + j^\perp \circ j^{\perp\dag}.
$$
\end{proof}

\begin{theorem}\label{L}
For each object $X$, the lattice $\cat{C}(I,X)$ is a complete Boolean algebra.
\end{theorem}

\begin{proof}
We have already shown that $\cat{C}(I, X)$ is a complete ortholattice. It remains to prove the distributive law. Let $a\: I \to X$. We will show that $b \mapsto a \And b$ distributes over joins.

Let $b\: I \to X$. By Lemma~\ref{I}, the dagger kernel $j = \ker(a^\dag)^\perp\: A \to X$ satisfies $j \circ \top_A = a$. We claim that $j \circ j^\dag \circ b = a \And b$. We certainly have that $j \circ j^\dag \circ b \leq j \circ \top_A = a$, and by Lemma~\ref{K}, we also have that $j \circ j^\dag \circ b \leq j \circ j^\dag \circ b + j^\perp \circ j^{\perp\dag}\circ b = b$. Thus, $j \circ j^\dag \circ b$ is a lower bound for $a$ and $b$.

Let $c\: I \to X$ be any lower bound for $a$ and $b$. Then, $(\Not a)^\dag \circ c \leq (\Not a)^\dag \circ a = 0$, so $c = \ker((\Not a)^\dag) \circ d$ for some morphism $d$. Applying Lemma~\ref{I} again, we calculate that
$
c = \ker(\top_{A^\perp}^\dag \circ j^{\perp\dag}) \circ d = \ker(j^{\perp\dag}) \circ d= j^{\perp\perp} \circ d = j \circ d.  
$
It follows that 
$$
c = j \circ d  = j \circ j^\dag \circ j \circ d = j\circ j^\dag \circ c \leq j \circ j^\dag \circ b.
$$
Therefore, $j \circ j^\dag \circ b = a \And b$ for all $b\: I \to X
$.

Let $b_1, b_2 \: I \to X$. We calculate that
$$
a \And (b_1 \Or b_2) = j \circ j^\dag \circ (b_1 + b_2) = j \circ j^\dag \circ b_1 + j \circ j^\dag \circ b_2 = (a \And b_1) \Or (a \And b_2).
$$
Therefore, $a \And (b_1 \Or b_2) = (a \And b_1) \Or (a \And b_2)$ for all $a, b_1, b_2\: I \to X$. We conclude that $\cat{C}(I, X)$ is a Boolean algebra.
\end{proof}

\section{Characterizations of $\cat{Rel}$}\label{part2}

Additionally, assume that $(\cat{C}, \tensor, I, \dag)$ is dagger compact closed \cites{Selinger, AbramskyCoecke}. This means that each object has a dagger dual. Explicitly, for each object $X$, there exists an object $X^*$ and a morphism $\eta_X \: I \to X^* \tensor X$ such that $\eta_X$ and $\eta_{X^*} := \beta_{X^*, X} \circ \eta_X$ together satisfy
$
(\id_X \tensor \eta_X^\dagger) \circ (\eta_{X^*} \tensor \id_X) = \id_X
$
and
$
(\id_{X^*} \tensor \eta_{X^*}^\dagger) \circ (\eta_X \tensor \id_{X^*}) = \id_{X^*}.
$
Here, $\beta$ is the braiding, and we have suppressed the associator and the unitors. See section~\ref{dagger categories} for more details. More commonly, the dagger dual of $X$ is equivalently defined in terms of two morphisms $\eta_X\: I \to X^* \tensor X$ and $\epsilon_X\: X \tensor X^* \to I $ that are then related by $\epsilon_X^\dag = \beta_{X^*, X} \circ \eta_X$.

In any dagger compact closed category, we have a bijection $\cat{C}(X \tensor Y, Z) \to \cat{C}(Y, X^* \tensor Z)$ that is defined by $r \mapsto (\id_{X^*} \tensor r) \circ (\eta_X \tensor \id_Y)$. We use this bijection to show that the monoidal unit is a monoidal separator.

\begin{proposition}\label{M}
$I$ is a monoidal separator.
\end{proposition}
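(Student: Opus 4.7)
The plan is to show that any $u, v \: X \tensor Y \to Z$ agreeing on every $a \tensor b$ with $a \: I \to X$ and $b \: I \to Y$ must be equal, by applying the separator hypothesis on $I$ twice. First, I would fix $b \: I \to Y$ and decompose $a \tensor b = (\id_X \tensor b) \circ (a \tensor \id_I)$ by functoriality of $\tensor$. The family of morphisms $a \tensor \id_I \: I \tensor I \to X \tensor I$, indexed by $a \: I \to X$, is jointly epic: conjugation by the unitors at $I$ and $X$ identifies it with the family of all $a \: I \to X$, which is jointly epic because $I$ is a separator, and joint epicness is plainly preserved by pre- and post-composition with isomorphisms. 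From the hypothesis $u \circ (\id_X \tensor b) \circ (a \tensor \id_I) = v \circ (\id_X \tensor b) \circ (a \tensor \id_I)$ for every $a$, I can then conclude $u \circ (\id_X \tensor b) = v \circ (\id_X \tensor b)$ for every $b$.

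For the second reduction I need the family of morphisms $\id_X \tensor b \: X \tensor I \to X \tensor Y$, indexed by $b \: I \to Y$, to be jointly epic. This family is the image under the functor $X \tensor -$ of the jointly epic family of all $b \: I \to Y$, so the step reduces to the lemma that $X \tensor -$ preserves jointly epic families. For this I would invoke the tensor--hom adjunction $\cat{C}(X \tensor A, B) \iso \cat{C}(A, X^* \tensor B)$ supplied by the dagger compact closed structure: given $p, q \: X \tensor B \to C$ with $p \circ (\id_X \tensor f_i) = q \circ (\id_X \tensor f_i)$ for every $i$, naturality of the adjunction in $A$ gives $p^\sharp \circ f_i = q^\sharp \circ f_i$ for the transposes $p^\sharp, q^\sharp \: B \to X^* \tensor C$; joint epicness of $(f_i)$ then forces $p^\sharp = q^\sharp$, and inverse transposition yields $p = q$.

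Combining the two reductions gives $u = v$, establishing that $I$ is a monoidal separator. The main effort in writing the proof out is the unitor bookkeeping in the first step and a clean appeal to naturality in the lemma; neither is substantive. Conceptually, the point is that dagger compactness equips $X \tensor -$ with a right adjoint $X^* \tensor -$, and this is exactly enough to transmit the separator property from $\cat{C}(I, Y)$ across the tensor functor.
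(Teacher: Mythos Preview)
Your proposal is correct and follows essentially the same approach as the paper: first use the separator hypothesis on the $a$-variable, then use the compact closed adjunction $\cat{C}(X \tensor A, B) \iso \cat{C}(A, X^* \tensor B)$ to handle the $b$-variable. The paper carries out the second step by an explicit transpose computation (rewriting $r_i \circ (\id_X \tensor b)$ under the adjunction and applying the separator hypothesis to $b$ directly), whereas you package the same content as the abstract lemma that the left adjoint $X \tensor -$ preserves jointly epic families; the underlying argument is identical.
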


\begin{proof}
Let $r_1, r_2\: X \tensor Y \to Z$, and assume that $r_1 \circ (a \tensor b) = r_2 \circ (a \tensor b)$ for all $a\: I \to X$ and $b\: I \to Y$. This equation is equivalent to $r_1 \circ (\id_X \tensor b) \circ a = r_2 \circ (\id_X \tensor b) \circ a$. It follows that $r_1 \circ (\id_X \tensor b) = r_2 \circ (\id_X \tensor b)$ for all $b\: I \to Y$, because $I$ is a separator. Applying the canonical bijection $\cat{C}(X, Z) \to \cat{C}(I, X^* \tensor Z)$, we find that $(\id_{X^*} \tensor (r_1 \circ (\id_X \tensor b))) \circ \eta_X = (\id_{X^*} \tensor (r_2 \circ (\id_X \tensor b))) \circ \eta_X$. Now we compute that
\begin{align*}
(\id_{X^*} \tensor r_1) \circ (\eta_X \tensor \id_Y) \circ b
& =
(\id_{X^*} \tensor (r_1 \circ (\id_X \tensor b))) \circ \eta_X
=
(\id_{X^*} \tensor (r_2 \circ (\id_X \tensor b))) \circ \eta_X
\\ & =
(\id_{X^*} \tensor r_2) \circ (\eta_X \tensor \id_Y) \circ b.
\end{align*}
It follows that $(\id_{X^*} \tensor r_1) \circ (\eta_X \tensor \id_Y) = (\id_{X^*} \tensor r_2) \circ (\eta_X \tensor \id_Y)$, because $I$ is a separator. Since the function $r \mapsto (\id_{X^*} \tensor r) \circ (\eta_X \tensor \id_Y)$ is a bijection $\cat{C}(X \tensor Y, Z) \to \cat{C}(Y, X^* \tensor Z)$, we conclude that $r_1 = r_2$. More generally, we conclude that  $I$ is a monoidal separator.
\end{proof}

Recall that an element $x$ of a Boolean algebra is said to be an \emph{atom} if $a \leq x$ implies that $a = x$ or $a = 0$, where $0$ is the minimum element of the Boolean algebra.

\begin{lemma}\label{N}
Let $X$ be an object. If $\top_X^\dag \circ \top_X = 1$, then $\cat{C}(I,X)$ contains an atom.
\end{lemma}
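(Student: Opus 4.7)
Proof plan:

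The plan is to identify atoms of $\cat{C}(I,X)$ with nonzero dagger kernels $I \to X$, then produce such a dagger kernel by Zorn's lemma applied to the poset of nonzero dagger kernels into $X$.

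For the reformulation, Lemma~\ref{I} factors each nonzero $a\: I \to X$ as $a = j \circ \top_A$ for $j = \ker(a^\dagger)^\perp\: A \to X$. The principal downset $\downarrow a \subseteq \cat{C}(I,X)$ is then isomorphic as a Boolean algebra to $\cat{C}(I,A)$ via $c \mapsto j \circ c$, with inverse $b \mapsto j^\dagger \circ b$; this uses the identity $b = j \circ j^\dagger \circ b$ for $b \leq a$, which drove the proof of Theorem~\ref{L}. Hence $a$ is an atom precisely when $\cat{C}(I,A) = \{0_A, \top_A\}$. A short calculation then forces $A \cong I$: because $A$ is nonzero, Proposition~\ref{G} yields $\top_A^\dagger \circ \top_A = 1$, so for each $b\: I \to A$ the morphism $(\top_A \circ \top_A^\dagger) \circ b$ agrees with $\id_A \circ b$ trivially when $b = 0$ and also when $b = \top_A$ (the only other option) via $\top_A \circ \top_A^\dagger \circ \top_A = \top_A$; the separator property then gives $\top_A \circ \top_A^\dagger = \id_A$, making $\top_A$ a dagger isomorphism $I \to A$. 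Consequently, atoms in $\cat{C}(I,X)$ correspond exactly to nonzero dagger kernels $I \to X$.

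Given this reformulation, I would apply Zorn's lemma to the poset $\mathcal{K}$ of nonzero dagger kernels $j\: A \to X$, ordered by factorization ($j' \leq j$ iff $j' = j \circ k$ for a dagger kernel $k$). The hypothesis $\top_X^\dagger \circ \top_X = 1$ puts $\id_X$ in $\mathcal{K}$, so $\mathcal{K}$ is nonempty; a minimal element has a domain with no proper nonzero dagger sub-kernel, hence isomorphic to $I$ by the reformulation applied inside the domain, and the resulting composite dagger kernel $I \to X$ is the desired atom.

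The main obstacle is the chain condition. For a descending chain $\{j_\alpha\: A_\alpha \to X\}$ in $\mathcal{K}$, the natural candidate for a lower bound is the dagger kernel associated to $a_\ast := \bigwedge_\alpha (j_\alpha \circ \top_{A_\alpha})$ in the complete Boolean algebra $\cat{C}(I,X)$, so we must ensure $a_\ast \neq 0$. This cannot be concluded by purely lattice-theoretic means, since atomless complete Boolean algebras exist; the hypothesis $\top_X^\dagger \circ \top_X = 1$ and the dagger compact closed structure must enter essentially. I expect the argument to use the monoidal separator property (Proposition~\ref{M}) together with the compatibility of composition with infinitary joins recorded before Definition~\ref{F}, perhaps via the canonical isomorphism $\cat{C}(X,X) \cong \cat{C}(I, X^* \tensor X)$, to show that $a_\ast = 0$ would propagate to $\top_X^\dagger \circ \top_X = 0$ and contradict the hypothesis. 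This chain-condition step is the substantive heart of the proof.
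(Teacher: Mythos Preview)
Your proposal has a genuine gap: the chain condition in Zorn's lemma is precisely where the whole difficulty lies, and you do not prove it. Your hope that $a_\ast = 0$ would ``propagate'' to $\top_X^\dagger \circ \top_X = 0$ does not work as stated: writing $\top_X = \bigvee_\alpha \Not(j_\alpha \circ \top_{A_\alpha})$ and using that composition preserves joins only gives $\top_X^\dagger \circ \top_X = \bigvee_\alpha \top_X^\dagger \circ \Not(j_\alpha \circ \top_{A_\alpha})$, and each summand here can perfectly well equal $1$. More fundamentally, asking that every descending chain of nonzero elements in $\cat{C}(I,X)$ have nonzero meet is \emph{equivalent} to atomicity in a complete Boolean algebra, so your Zorn argument has not reduced the problem at all; you have merely restated it.

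The paper's proof is entirely different and supplies the missing idea: it uses the \emph{trace} coming from compact closure. Assuming $\cat{C}(I,X)$ has no atoms, one forms $r = \bigvee_{c} \Not c \circ c^\dagger \colon X \to X$ and checks, using that every nonzero $a$ splits as $a_1 \Or a_2$ with $a_1, a_2$ disjoint and nonzero, that $r \circ a = \top_X$ for every nonzero $a$; since $I$ separates, $r = \top_X \circ \top_X^\dagger$. Now trace cyclicity and join-preservation give
\[
1 = \Tr(\top_X^\dagger \circ \top_X) = \Tr(\top_X \circ \top_X^\dagger) = \Tr(r) = \bigvee_c \Tr(\Not c \circ c^\dagger) = \bigvee_c c^\dagger \circ \Not c = 0,
\]
a contradiction. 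The trace is exactly the piece of compact-closed structure that converts the hypothesis $\top_X^\dagger \circ \top_X = 1$ into a constraint on the internal structure of $\cat{C}(I,X)$; nothing in your outline invokes it.
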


\begin{proof}
Assume that $\top_X^\dag \circ \top_X = 1$, and assume that $\cat{C}(I, X)$ contains no atoms. Let $s\: X \to X$ be the morphism $s=\sup\{ \Not c \circ c^\dag \suchthat c\:I \to X\}$. Let $a$ be a nonzero morphism $I \to X$. By assumption, $a$ is not an atom, so $a = a_1 \Or a_2$ for some disjoint nonzero $a_1, a_2\: I \to X$. Hence,
\begin{align*}
s \circ a \geq  
((\Not a_1 \circ a_1^\dag) \Or (\Not a_2 \circ a_2^\dag)) \circ a  & = (\Not a_1 \circ a_1^\dag \circ a) \Or (\Not a_2 \circ a_2^\dag \circ a) \\ &  = \Not a_1 \Or \Not a_2 = \Not (a_1 \And a_2) = \Not 0_X = \top_X.
\end{align*}
We conclude that $s \circ a = \top_X$ for all nonzero $a\: I \to X$, and of course, $s\circ 0_X = 0_X$. Because $I$ is separating, it follows that $s = \top_X \circ \top_X^\dag$. 

The monoidal category $(\cat{C}, \tensor, I)$ has a trace because it is compact closed. The trace of an endomorphism $r\: X \to X$ is defined by $\Tr(r) = \eta_X^\dag \circ (\id_{X^*} \tensor r) \circ \eta_X \in \cat{C}(I,I)$. For the standard properties of the trace, see \cite{HeunenVicary}*{section~3.4.5}. Furthermore, the enrichment of $\cat{C}$ over complete monoids \cite{Laird}*{Proposition~2.3} immediately implies that $\Tr\: \cat C(X, X) \to \cat C(I, I)$ is a homomorphism of complete monoids. We use these properties to calculate that
\begin{align*}
1 & = \Tr(1) = \Tr (\top_X^\dag \circ \top_X) = \Tr(\top_X \circ \top_X^\dag) = \Tr\left( \bigvee_{c\: I \to X} \Not c \circ c^\dag\right) \\ & = \bigvee_{c\: I \to X} \Tr(\Not c \circ c^\dag) = \bigvee_{c\: I \to X} \Tr(c^\dag \circ \Not c) = \bigvee_{c\: I \to X} 0 = 0.
\end{align*}
This conclusion contradicts Proposition~\ref{D}. Therefore, $\cat{C}(I, X)$ has at least one atom.
\end{proof}

Recall that a Boolean algebra is said to be \emph{atomic} if every nonzero element is greater than or equal to an atom. A complete Boolean algebra that is atomic is also \emph{atomistic}, which means that every element is the join of some set of atoms.

\begin{theorem}\label{O}
Let $X$ be an object. Then $\cat{C}(I, X)$ is a complete atomic Boolean algebra.
\end{theorem}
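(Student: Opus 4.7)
The plan is to build on Theorem~\ref{L} and Lemma~\ref{N}. Theorem~\ref{L} already shows that $\cat{C}(I, X)$ is a complete Boolean algebra, so the remaining task is atomicity: every nonzero $a\: I \to X$ lies above some atom. The strategy is to localize along the dagger kernel associated to $a$ and apply Lemma~\ref{N} there.

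First, I would establish the key preparatory fact that every nonzero $a\: I \to X$ satisfies $a^\dagger \circ a = 1$. Since $\cat{C}(I, I) = \{0, 1\}$ by Proposition~\ref{D}, I only need to rule out $a^\dagger \circ a = 0$. Combining this with $a^\dagger \circ \Not a = 0$ from Definition~\ref{H} and the fact that composition is a join homomorphism in the second variable would give $a^\dagger \circ \top_X = a^\dagger \circ (a \Or \Not a) = 0$, and then Proposition~\ref{G} would force $a^\dagger = 0$, contradicting $a \neq 0$.

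Next, I would apply Lemma~\ref{I} to write $a = j \circ \top_A$, where $j = \ker(a^\dagger)^\perp\: A \to X$ is a dagger kernel. Using $j^\dagger \circ j = \id_A$, one computes $\top_A^\dagger \circ \top_A = a^\dagger \circ a = 1$, so Lemma~\ref{N} supplies an atom $b\: I \to A$. I would then claim that $j \circ b$ is an atom of $\cat{C}(I, X)$ below $a$. It is clearly below $a = j \circ \top_A$. If $0 \neq c \leq j \circ b$ in $\cat{C}(I, X)$, then in particular $c \leq a$, so invoking the computation from the proof of Theorem~\ref{L} that $a \And c = j \circ j^\dagger \circ c$, I get $c = j \circ d$ with $d := j^\dagger \circ c$, and $d \neq 0$ since $c \neq 0$. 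Because $j$ is monic, $j \circ d \leq j \circ b$ yields $d \leq b$, and atomicity of $b$ then forces $d = b$, whence $c = j \circ b$.

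The main obstacle is establishing $a^\dagger \circ a = 1$ for every nonzero $a\: I \to X$: this is the hinge that allows Lemma~\ref{N} to be applied to the object $A$ rather than only to $X$ itself. Once that is secured, the transfer of atomicity along $j$ is routine, since $c \mapsto j^\dagger \circ c$ and $d \mapsto j \circ d$ furnish mutually inverse order isomorphisms between the principal down-set of $a$ in $\cat{C}(I, X)$ and $\cat{C}(I, A)$, under which atoms correspond to atoms.
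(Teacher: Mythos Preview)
Your proposal is correct and follows essentially the same route as the paper: reduce to the object $A$ via Lemma~\ref{I}, invoke Lemma~\ref{N} there, and push the resulting atom forward along $j$. The paper frames this as a contradiction (assume an atomless nonzero $a$ exists), uses Lemma~\ref{K} directly to get $c = j \circ j^\dagger \circ c$, and applies monotonicity of $j^\dagger \circ (-)$ rather than monicity of $j$, but these are cosmetic variations. Your explicit verification that $a^\dagger \circ a = 1$ for nonzero $a$ is a point the paper simply asserts; your argument via $a^\dagger \circ (a \vee \neg a) = a^\dagger \circ \top_X$ and Proposition~\ref{G} is clean and fills that gap. One small omission: you should note that $j \circ b \neq 0$ (e.g.\ because $j^\dagger \circ j \circ b = b \neq 0$) before arguing minimality.
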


\begin{proof}
Assume that $\cat{C}(I, X)$ is not atomic. It follows that there exists a nonzero morphism $a\: I \to X$ such that there exist no atoms $x \leq a$. By Lemma~\ref{I}, there exists a dagger kernel $j\: A \to X$ such that $j \circ \top_A = a$ and hence $\top_A^\dag \circ \top_A = a^\dag \circ a = 1$. By Lemma~\ref{N}, $\cat{C}(I, A)$ contains an atom $z$.

We claim that $j \circ z$ is an atom of $\cat{C}(I,X)$. This morphism is certainly nonzero, because $j^\dag \circ j \circ z = z \neq 0_A$. Let $b \leq j \circ z$ be nonzero too. Then, $j^\perp \circ j^{\perp\dag} \circ b \leq j ^\perp \circ j^{\perp \dag} \circ j \circ z = 0_{X}$, so
$$
j \circ j^\dag \circ b = j \circ j^\dag \circ b + j^\perp \circ j^{\perp\dag} \circ b  = b
$$
by Lemma~\ref{K}. Thus, $j^\dag \circ b \neq 0_A$ because otherwise, we would have that $b = j \circ j^\dag \circ b = 0_X$. Furthermore, $j^\dag \circ b \leq j^\dag \circ j \circ z = z$. Because $z$ is an atom, we conclude that $j^\dag \circ b = z$ and hence that $b = j \circ j^\dag \circ b = j \circ z$. Therefore, $j \circ z$ is an atom.

Of course, $j \circ z \leq j \circ \top_A = a$, so there is a contradiction with our choice of $a$. We conclude that $\cat{C}(I, X)$ is atomic after all.
\end{proof}

\begin{definition}\label{P}
For each object $X$, define $E(X)$ to be the set of atoms of $\cat{C}(I,X)$. For each morphism $r\: X \to Y$, define $E(r) = \{(x,y) \in E(X) \times E(Y) \suchthat y^\dag \circ r \circ x = 1\}.$
\end{definition}

We now show that $E$ is an equivalence of dagger symmetric monoidal categories $\cat{C} \to \cat{Rel}$. We will often appeal to the following elementary proposition.

\begin{proposition}\label{new}
Let $X$ be an object, and let $x_1, x_2 \in E(X)$. Then, $x_1 = x_2$ iff $x_1^\dag \circ x_2 = 1$.
\end{proposition}

\begin{proof}
Because $\cat{C}(I,X)$ is a Boolean algebra, we have that
$$
(x_1 \And x_2) \Or ( \Not x_1 \And x_2) = (x_1 \Or \Not x_1) \And x_2 = x_2,
$$
$$
(x_1 \And x_2) \And (\Not x_1 \And x_2) = x_1 \And \Not x_1 \And x_2 = 0_X.
$$
Since $x_2$ is an atom and $x_1\And x_2, \Not x_1 \And x_2 \leq x_2$, we infer that $x_1 \And x_2 = x_2$ iff if $\Not x_1 \And x_2 \neq x_2$. We now reason that
\begin{align*}
x_1  = x_2 &\EV x_2 \leq x_1 \EV x_1 \And x_2 = x_2 \EV \Not x_1 \And x_2 \neq x_2 \\ & \EV x_2 \not \leq \Not x_1 \EV x_1^\dag \circ x_2 \neq 0 \EV x_1^\dag \circ x_2 = 1.
\end{align*}
The first equivalence holds by the definition of an atom, and the fifth equivalence holds by Definition~\ref{H}. Thus, the proposition is proved.
\end{proof}

\begin{lemma}\label{Q}
Let $X$ be an object. Then, $\displaystyle \id_X = \bigvee_{x \in E(X)} x \circ x^\dag.$
\end{lemma}

\begin{proof}
We apply Proposition~\ref{new} to calculate that for all $a\: I \to X$,
\begin{align*}
\left(\bigvee_{x \in E(X)} x \circ x^\dag\right) \circ a  & = \left(\bigvee_{x \in E(X)} x \circ x^\dag\right) \circ \left(\bigvee_{\begin{smallmatrix} y \in E(X) \\ y \leq a \end{smallmatrix}} y\right)
\\ & =
\bigvee_{x \in E(X)}\bigvee_{\begin{smallmatrix} y \in E(X) \\ y \leq a \end{smallmatrix}} x \circ x^\dagger \circ y
= \bigvee_{\begin{smallmatrix} x \in E(X) \\ x \leq a \end{smallmatrix}} x = a = \id_X \circ a.
\end{align*}
We conclude the claimed equality because $I$ is a separator.
\end{proof}

\begin{lemma}\label{R}
$E$ is a dagger functor $\cat{C} \to \cat{Rel}$. This means that $E$ is a functor such that $E(r^\dag) = E(r)^\dag$ for all morphisms $r$ of $\cat{C}$.
\end{lemma}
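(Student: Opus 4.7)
The plan is to verify the three functor axioms in turn: $E(\id_X) = \id_{E(X)}$, $E(s \circ r) = E(s) \circ E(r)$ for composable $r$ and $s$, and $E(r^\dagger) = E(r)^\dagger$. Throughout, I would rely on the dichotomy from Proposition~\ref{D} that $\cat{C}(I, I) = \{0, 1\}$, together with the characterization recalled just before Lemma~\ref{Q}: atoms $x_1, x_2 \in E(X)$ coincide precisely when $x_1^\dagger \circ x_2 = 1$.

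Preservation of identities is immediate, since $(x_1, x_2) \in E(\id_X)$ iff $x_2^\dagger \circ x_1 = 1$ iff $x_1 = x_2$. Preservation of the dagger is also essentially formal: because $\id_I^\dagger = \id_I$, the scalar $y^\dagger \circ r \circ x$ equals $1$ iff its dagger $x^\dagger \circ r^\dagger \circ y$ equals $1$, so $(x, y) \in E(r)$ iff $(y, x) \in E(r^\dagger)$, which is exactly $E(r^\dagger) = E(r)^\dagger$.

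The substantive step is preservation of composition. For $r\: X \to Y$ and $s\: Y \to Z$, I would insert $\id_Y$ between $s$ and $r$, apply Lemma~\ref{Q}, and use that composition preserves suprema (as noted after Proposition~\ref{E}) to obtain
$$
z^\dagger \circ s \circ r \circ x \;=\; z^\dagger \circ s \circ \Bigl(\bigvee_{y \in E(Y)} y \circ y^\dagger\Bigr) \circ r \circ x \;=\; \bigvee_{y \in E(Y)} (z^\dagger \circ s \circ y) \circ (y^\dagger \circ r \circ x).
$$
Each factor $z^\dagger \circ s \circ y$ and $y^\dagger \circ r \circ x$ is an endomorphism of $I$, hence either $0$ or $1$ by Proposition~\ref{D}; the composite of two such scalars is $1$ iff both factors are $1$, and a join of elements of $\{0, 1\}$ is $1$ iff at least one term is $1$. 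Consequently $z^\dagger \circ s \circ r \circ x = 1$ iff there exists $y \in E(Y)$ with $y^\dagger \circ r \circ x = 1$ and $z^\dagger \circ s \circ y = 1$, which is precisely the condition that $(x, z)$ belong to the relational composite $E(s) \circ E(r)$.

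I do not anticipate any serious obstacle beyond the composition clause, and even there the only point requiring care is the justification for interchanging composition with the infinite join; once Lemma~\ref{Q} has been used to replace $\id_Y$ by its decomposition into atoms, the entire argument collapses into Boolean arithmetic on the two-element scalar monoid.
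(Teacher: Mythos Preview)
Your proposal is correct and follows essentially the same route as the paper: the paper handles $E(\id_X)$ by the atom criterion $x_1 = x_2 \Leftrightarrow x_2^\dagger \circ x_1 = 1$, handles $E(s\circ r)$ by inserting Lemma~\ref{Q} for $\id_Y$ and reducing to Boolean arithmetic in $\cat{C}(I,I)$, and dismisses the dagger clause as immediate from the definition. Your version spells out the dagger step a bit more explicitly, but there is no substantive difference in strategy.
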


\begin{proof}
Let $X$ be an object of $\cat{C}$. 
\begin{align*}
E(\id_X) 
& = 
\{(x_1, x_2) \in E(X) \times E(X) \suchthat x_2^\dag \circ \id_X \circ x_1 = 1\}
\\& =
\{(x_1, x_2) \in E(X) \times E(X) \suchthat x_1 = x_2\} = \id_{E(X)}.
\end{align*}
Let $r\: X \to Y$ and $s\: Y \to Z$ be morphisms of $\cat{C}$. We apply Lemma~\ref{Q} to calculate that
\begin{align*}
E(s \circ r) & = \{(x,z) \in E(X) \times E(Z) \suchthat z^\dag \circ s \circ r \circ x= 1\}
\\ & \textstyle = \{(x,z) \in E(X) \times E(Z) \suchthat  z^\dag \circ s \circ (\bigvee_{y \in E(Y)} y \circ y^\dag )\circ r \circ x= 1\}
\\ & \textstyle = \{(x,z) \in E(X) \times E(Z) \suchthat \bigvee_{y \in E(Y)} z^\dag \circ s \circ y \circ y^\dag\circ r \circ x= 1\}
\\ & \textstyle= \{(x,z) \in E(X) \times E(Z) \suchthat \bigvee_{y \in E(Y)} (z^\dag \circ s \circ y) \And (y^\dag\circ r \circ x) = 1\}
\\ &  = \{(x,z) \in E(X) \times E(Z) \suchthat z^\dag \circ s \circ y = 1 \text{ and } y^\dag\circ r \circ x= 1\text{ for some }y \in E(Y)\}
\\ & = \{(x,z) \in E(X) \times E(Z) \suchthat (y,z) \in s \text{ and } (x,y) \in r \text{ for some }y \in E(Y)\}
\\ & =
E(s) \circ E(r).
\end{align*}
Thus, $E$ is a functor. Furthermore,
\begin{align*}
E(r^\dag)  &
= \{E(Y) \times E(X) \suchthat x^\dag \circ r^\dag \circ y = 1 \}
\\ & = 
\{ E(Y) \times E(X) \suchthat y^\dag \circ r \circ x = 1\}
\\ & =
\{E(Y) \times E(X) \suchthat (x,y) \in E(r)\} = E(r)^\dag.
\end{align*}
Therefore, $E$ is a dagger functor.
\end{proof}

\begin{proposition}\label{S}
$E$ is a dagger equivalence $\cat{C} \to \cat{Rel}$. This means that $E$ is a full and faithful dagger functor and every set is dagger isomorphic to $E(X)$ for some object $X$ of $\cat{C}$.
\end{proposition}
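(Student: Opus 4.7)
The plan is to verify the three defining properties of a dagger equivalence: faithfulness, fullness, and essential surjectivity. The key ingredients will be Lemma~\ref{Q}, which expresses $\id_X$ as a join over atoms, together with the fact that composition preserves suprema in each variable (noted after Proposition~\ref{E}) and the rule recorded just before Lemma~\ref{Q} that for atoms $x_1, x_2 \in E(X)$ the scalar $x_1^\dagger \circ x_2$ equals $1$ if $x_1 = x_2$ and $0$ otherwise.

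For faithfulness and fullness, the same computation serves both. Given any $r\: X \to Y$, sandwiching with identities rewritten via Lemma~\ref{Q} and distributing composition across joins gives
\[ r = \bigvee_{x \in E(X),\, y \in E(Y)} y \circ (y^\dagger \circ r \circ x) \circ x^\dagger, \]
where the middle factor is a scalar in $\{0, 1\}$. Thus $r$ is determined by the scalars $y^\dagger \circ r \circ x$, which is precisely the data of $E(r)$, so faithfulness is immediate. For fullness, given any relation $\rho \subseteq E(X) \times E(Y)$, I would propose $r_\rho := \bigvee_{(x, y) \in \rho} y \circ x^\dagger$ and verify that $E(r_\rho) = \rho$; the key calculation is that $y_0^\dagger \circ y \circ x^\dagger \circ x_0$ factors as a product of two scalars (composition in $\cat{C}(I, I)$ is commutative multiplication in any symmetric monoidal category), each equal to $1$ iff the corresponding pair of atoms agree.

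For essential surjectivity, given any set $S$, I would take $X := \bigoplus_{s \in S} I$, which exists since $\cat{C}$ has biproducts for all families. The biproduct inclusions $i_s\: I \to X$ are pairwise-orthogonal dagger kernels, and the goal is to show $E(X) = \{i_s \suchthat s \in S\}$; this provides a bijection $S \to E(X)$ and hence a dagger isomorphism in $\cat{Rel}$. That each $i_s$ is an atom follows because the meet formula $a \And b = j \circ j^\dagger \circ b$ from the proof of Theorem~\ref{L} (with $j = i_s$) forces any $b \leq i_s$ to equal $i_s \circ (i_s^\dagger \circ b)$, so $b$ is $0$ or $i_s$. The step I expect to be the main obstacle is exhausting the atoms: for any atom $z \in E(X)$, I would use the biproduct identity $\id_X = \bigvee_{s \in S} i_s \circ i_s^\dagger$, distribute composition with $z$ across the join, and conclude that some $i_s^\dagger \circ z$ must equal $1$ (otherwise $z = 0$), which forces $i_s \leq z$ and hence $z = i_s$ by atomicity.
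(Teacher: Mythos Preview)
Your proposal is correct and follows the same three-part structure as the paper's proof, with the fullness argument being essentially identical (the paper also takes $\bigvee_{(x,y)\in R} y\circ x^\dagger$ and checks it maps to $R$). The implementation details differ in the other two parts, however. For faithfulness, you decompose $r$ itself via Lemma~\ref{Q} applied on both sides, whereas the paper instead invokes Theorem~\ref{O} to pass from atoms to arbitrary $a,b$ and then uses the separator hypothesis twice; your route is a bit more direct and avoids re-quoting atomicity. For essential surjectivity, both proofs take $X=\bigoplus_{s\in S} I$, but the paper shows that each inclusion is an atom and that there are no other atoms purely from the universal property of the (co)product (computing $a^\dagger\circ j_m$ and $x^\dagger\circ j_m$ against all indices), while you instead import the meet formula from the proof of Theorem~\ref{L} together with the biproduct identity $\id_X=\bigvee_s i_s\circ i_s^\dagger$. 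Your approach leans on more of the developed machinery; the paper's is more self-contained. One small point to tighten: when you write ``forces $i_s\leq z$'' from $i_s^\dagger\circ z=1$, make explicit that this follows because $i_s\circ i_s^\dagger\circ z = i_s\wedge z$ by the very meet formula you cited, so $i_s\wedge z = i_s$.
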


\begin{proof}
Let $r, s\: X \to Y$. Assume that $E(r) = E(s)$, i.e., that $y^\dag \circ r \circ x = y^\dag \circ s \circ x$ for all atoms $x\: I \to X$ and all atom $y\: I \to Y$. Since $\cat{C}(I, X)$ and $\cat{C}(I, Y)$ are complete atomic Boolean algebras by Theorem~\ref{O}, we find that $b^\dag \circ r \circ a = b^\dag \circ s \circ a$ for all morphisms $a\: I \to X$ and all morphisms $b\: I \to Y$. Appealing twice to our assumption that $I$ is a separator, we conclude that $r = s$. Therefore, $E$ is faithful.

Let $X$ and $Y$ be objects of $\cat{C}$, and let $R\: E(X) \to E(Y)$ be a binary relation. We reason that for all $x_0 \in E(X)$ and $y_0 \in E(Y)$,
\begin{align*}&
(x_0, y_0) \in  E\left(\bigvee_{(x,y) \in R} y \circ x^\dag\right)
\EV
y_0^\dag \circ \left(\bigvee_{(x,y) \in R} y \circ x^\dag\right) \circ x_0 = 1
\\ & \EV
\bigvee_{(x,y) \in R} y_0^\dag \circ y \circ x^\dag \circ x_0 = 1
\EV
\bigvee_{(x,y) \in R} (y_0^\dag \circ y) \And (x^\dag \circ x_0) = 1
\\ & \EV
y_0^\dag \circ y = 1 \text{ and } x^\dag \circ x_0 = 1 \text{ for some } (x,y) \in R
\EV
(x_0, y_0) \in R.
\end{align*}
We conclude that $E\left(\bigvee_{(x,y) \in R} y \circ x^\dag\right) = R$. Therefore, $E$ is full.

Let $M$ be a set. Let $X = \bigoplus_{m \in M} I$, and for each $m \in M$, let $j_m\: I \to X$ be the inclusion morphism for the summand of index $m$. We prove that $j_m$ is an atom. Let $a\: I \to X$ be a nonzero morphism such that $a \leq j_m$. It follows that $a^\dag \circ j_m \geq a^\dag \circ a = 1$. Furthermore, for all $m' \neq m$, we have that $a^\dag \circ j_{m'} \leq j_m^\dag \circ j_{m'} = 0$. By the universal property of $X$, we conclude that $a^\dag = j_m^\dag$ or equivalently that $a = j_m$. Therefore, $j_m$ is an atom for all $m \in M$.

Suppose that there is an atom $x\: I \to X$ such that $x \neq j_m$ for all $m \in M$. Then $x^\dag \circ j_m = 0$. By the universal property of $X$, we conclude that $x^\dag = 0_{X, I}$, contradicting that $x$ is an atom. Thus, $E(X) = \{j_m \suchthat m \in M\}$. The function $m \mapsto j_m$ is a dagger isomorphism $M \to E(X)$ in $\cat{Rel}$ because it is a bijection. Therefore, every set is dagger isomorphic to $E(X)$ for some object $X$ of $\cat{C}$.
\end{proof}

Finally, we prove that $E$ is a monoidal functor. We suppress unitors throughout.

\begin{lemma}\label{T}
Let $X$ and $Y$ be objects of $\cat C$. Then, $x \tensor y \in E(X \tensor Y)$ for all $x \in E(X)$ and $y \in E(Y)$, and this defines a bijection $\mu_{X,Y}\: E(X) \times E(Y) \to E(X \tensor Y)$.
\end{lemma}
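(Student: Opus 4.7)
The plan is to verify the four constituent claims in turn: that $x \tensor y \neq 0$, that $x \tensor y$ is an atom, that $\mu_{X,Y}$ is injective, and that it is surjective. Two standing facts will do most of the work. First, because $\cat{C}(I,I) = \{0,1\}$, distinct atoms $x_1, x_2 \in E(X)$ satisfy $x_1^\dagger \circ x_2 = 0$, and any atom $x$ satisfies $x^\dagger \circ x = 1$. Second, the bifunctor $\tensor$ preserves joins in each variable, since in a dagger compact closed category $(-) \tensor A$ admits $(-) \tensor A^*$ as an adjoint and hence preserves biproducts, which in turn gives preservation of sums of parallel morphisms via the formula $\sum_\alpha r_\alpha = \Delta^\dagger \circ (\bigoplus_\alpha r_\alpha) \circ \Delta$; by Proposition~\ref{E}, these sums are precisely the joins.

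Nonzeroness is immediate from $(x \tensor y)^\dagger \circ (x \tensor y) = (x^\dagger \circ x) \tensor (y^\dagger \circ y) = 1 \tensor 1 = 1 \neq 0$. For atomicity of $x \tensor y$, I would take $0 \neq a \leq x \tensor y$ and decompose using $\id_Y = \bigvee_{y' \in E(Y)} y' \circ y'^\dagger$ from Lemma~\ref{Q}. This yields
\[
a = (\id_X \tensor \id_Y) \circ a = \bigvee_{y' \in E(Y)} a_{y'} \tensor y', \quad \text{where} \quad a_{y'} := (\id_X \tensor y'^\dagger) \circ a.
\]
Applying $\id_X \tensor y'^\dagger$ to the inequality $a \leq x \tensor y$ and using monotonicity, one obtains $a_y \leq x$ (so $a_y \in \{0,x\}$ since $x$ is an atom) and $a_{y'} \leq x \tensor 0 = 0$ for $y' \neq y$. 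Thus $a = a_y \tensor y$, and nonzeroness of $a$ rules out $a_y = 0$, forcing $a = x \tensor y$.

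Injectivity of $\mu_{X,Y}$ follows by applying $\id_X \tensor y_1^\dagger$ to $x_1 \tensor y_1 = x_2 \tensor y_2$: the resulting equation $x_1 = x_2 \tensor (y_1^\dagger \circ y_2)$ forces the scalar $y_1^\dagger \circ y_2$ to equal $1$ (else $x_1 = 0$), hence $y_1 = y_2$ and then $x_1 = x_2$. For surjectivity, given an atom $z \in E(X \tensor Y)$, the same decomposition gives $z = \bigvee_{y' \in E(Y)} z_{y'} \tensor y'$ with $z_{y'} = (\id_X \tensor y'^\dagger) \circ z$. If two distinct $y_1', y_2'$ had $z_{y_i'} \neq 0$, then atomicity of $z$ would force both $z_{y_i'} \tensor y_i' \leq z$ to equal $z$; applying $\id_X \tensor y_1'^\dagger$ to the resulting equation $z_{y_1'} \tensor y_1' = z_{y_2'} \tensor y_2'$ yields $z_{y_1'} = z_{y_2'} \tensor 0 = 0$, a contradiction. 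Hence a unique $y_0 \in E(Y)$ has $z_{y_0} \neq 0$, and $z = z_{y_0} \tensor y_0$. Decomposing $z_{y_0}$ as the join of the atoms of $\cat{C}(I, X)$ it dominates and using that each $x' \tensor y_0$ is itself an atom, the atomicity of $z$ forces exactly one such $x_0$ to occur, so $z = x_0 \tensor y_0$.

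The step to linger on is the preservation of joins by $\tensor$ in each variable: this is the one place where dagger compact closedness enters essentially, and without it the decomposition formulas above collapse. Everything else is Boolean-algebra bookkeeping built on Lemma~\ref{Q} and the zero-one structure of $\cat{C}(I,I)$.
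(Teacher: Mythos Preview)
Your proof is correct, and it differs from the paper's chiefly in how you establish that $x \tensor y$ is an atom. The paper argues via the monoidal separator property (Proposition~\ref{M}): starting from any atom $z \leq x \tensor y$, it shows $z^\dagger \circ (a \tensor b) = (x \tensor y)^\dagger \circ (a \tensor b)$ for all $a\:I\to X$ and $b\:I\to Y$ by a two-case analysis (either $x \leq \Not a$ or $y \leq \Not b$, giving $0$ on both sides; or $x \leq a$ and $y \leq b$, giving $1$), and then invokes monoidal separation to conclude $z = x \tensor y$. You instead decompose an arbitrary nonzero $a \leq x \tensor y$ along the $Y$-factor via Lemma~\ref{Q} and kill all but one summand by applying $\id_X \tensor y'^\dagger$. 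Both arguments rest on the same infrastructure---Lemma~\ref{Q} and the fact that $(-)\tensor A$ preserves joins, which the paper also uses implicitly in its surjectivity step---so neither is more elementary; the paper's route has the mild advantage of making the role of Proposition~\ref{M} visible, while yours is more computational and avoids that detour.

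For surjectivity, the paper's argument is shorter than yours: once atomicity of $x \tensor y$ is known, one simply writes $z = (\id_X \tensor \id_Y)\circ z = \bigvee_{x,y}(x\tensor y)\circ(x\tensor y)^\dagger\circ z$, observes that some $(x\tensor y)^\dagger\circ z$ must be nonzero, and concludes $z = x \tensor y$ since an atom below an atom must equal it. Your two-stage decomposition (first in $Y$, then in $X$) reaches the same conclusion but with more bookkeeping; you could shorten it by adopting the paper's one-line version here.
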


\begin{proof}
Let $x \in E(X)$ and $y \in E(Y)$. Then, $x \tensor y$ is nonzero because $(x \tensor y)^\dag \circ (x \tensor y) = 1$. The Boolean algebra $\cat{C}(I, X \tensor Y)$ is atomic, so there is an atom $z\in E(X \tensor Y)$ such that $z \leq x \tensor y$. We now show that $z = x \tensor y$ by appealing to the fact that $I$ is a monoidal separator by Lemma~\ref{M}.

Let $a\: I \to X$ and $b\: I \to Y$. If $x \leq \Not a$ or $y \leq \Not b$, then $x^\dag \circ a = 0$ or $y^\dag \circ b = 0$, so
$$
z^\dag \circ (a \tensor b) \leq (x \tensor y)^\dag \circ (a \tensor b) = (x^\dag \circ a) \tensor (y^\dag \circ b) = 0
$$
and thus $z^\dag \circ (a \tensor b) = 0 = (x \tensor y)^\dag \circ (a \tensor b)$. If $x \leq a$ and $y \leq b$, then
$$
z^\dag \circ (a \tensor b) \geq z^\dag \circ (x \tensor y) \geq z^\dag \circ z = 1,
$$
and thus $z^\dag \circ (a \tensor b) = 1 = (x \tensor y)^\dag \circ (a \tensor b)$. Therefore, $z^\dag \circ (a \tensor b) = (x \tensor y)^\dag \circ (a \tensor b)$ for all $a\: I \to X$ and $b\: I \to Y$, and we conclude that $z^\dag = (x \tensor y)^\dag$ or equivalently that $z = x \tensor y$. Consequently, $x \tensor y$ is an atom.

We have shown that $x \tensor y \in E(X \tensor Y)$ for all $x \in E(X)$ and $y \in E(Y)$, and hence $(x, y) \mapsto (x \tensor y)$ defines a function $\mu_{X,Y}\: E(X) \times E(Y) \to E(X \tensor Y)$. This function is injective because $(x_1 \tensor y_1)^\dag \circ (x_2 \tensor y_2) = (x_1^\dag \circ x_2) \tensor (y_1^\dag \circ y_2) = 0$ whenever $x_1 \neq x_2$ or $y_1 \neq y_2$. This function is surjective because, by Lemma~\ref{Q}, for all $z \in E(X \tensor Y)$, we have that
$$
z = \id_{X \tensor Y} \circ z = (\id_X \tensor \id_Y) \circ z
=
\bigvee_{x \in E(X)} \bigvee_{y \in E(Y)} (x \tensor y) \circ (x \tensor y)^\dag \circ z
$$
and thus $(x \tensor y)^\dag \circ z \neq 0$ for some $(x,y) \in E(X) \times E(Y)$. Therefore, $\mu_{X, Y}$ is a bijection.
\end{proof}

\begin{proposition}\label{U}
$E$ is a strong symmetric monoidal functor $(\cat{C}, \tensor, I) \to (\cat{Rel}, \times, \{\ast\})$:
\begin{enumerate}
\item the isomorphism $\{\ast\} \to E(I)$ is the function $\ast \mapsto 1$;
\item the natural isomorphism $E(X) \times E(Y) \to E(X \tensor Y)$ is the function $(x, y) \mapsto x \tensor y$.
\end{enumerate}
\end{proposition}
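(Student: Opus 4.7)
The plan is to verify the two pieces of structural data (the unit isomorphism and the natural transformation $\mu$) and then the three coherence axioms (associativity, unitality, symmetry). The bijectivity of each $\mu_{X, Y}$ is already Lemma~\ref{T}, so only the unit component needs comment: by Proposition~\ref{D}, $\cat{C}(I, I) = \{0, 1\}$, so the sole atom of $\cat{C}(I, I)$ is $1$ and $E(I) = \{1\}$; hence $\ast \mapsto 1$ is a bijection $\{\ast\} \to E(I)$.

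Next I would check naturality of $\mu$. Given $r\: X \to X'$ and $s\: Y \to Y'$, I would show that the relations $E(r \tensor s) \circ \mu_{X, Y}$ and $\mu_{X', Y'} \circ (E(r) \times E(s))$ from $E(X) \times E(Y)$ to $E(X' \tensor Y')$ coincide. A pair $((x, y), x' \tensor y')$ belongs to the former iff $(x' \tensor y')^\dagger \circ (r \tensor s) \circ (x \tensor y) = 1$, which by the bifunctoriality of $\tensor$ equals $(x'^\dagger \circ r \circ x) \tensor (y'^\dagger \circ s \circ y)$, after suppressing unitors; it belongs to the latter iff both scalar factors equal $1$. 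Since each factor lies in $\{0, 1\}$ by Proposition~\ref{D}, and $1 \tensor 1 = 1$ while $1 \tensor 0 = 0 \tensor 1 = 0$, the two conditions agree.

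For the coherence axioms, I would evaluate each side on atoms. The pentagon at $(x, y, z) \in E(X) \times E(Y) \times E(Z)$ reduces by naturality of $\alpha$ to the claim that $\alpha_{I, I, I}$ corresponds to the scalar $1$ under the canonical unitor identifications $I^{\tensor 3} \iso I$; this holds because $\alpha_{I, I, I}$ is invertible and $\cat{C}(I, I) = \{0, 1\}$ by Proposition~\ref{D}. The unit triangles and the symmetry square reduce in the same way to the statements that $\lambda_I$, $\rho_I$, and $\beta_{I, I}$ all correspond to $1$ after absorbing unitors. The main obstacle will be purely bookkeeping: threading the suppressed unitors through the calculations and keeping track of which naturality square is invoked where. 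The conceptual point is simple: $E(f)$ is determined by how $f$ composes with atoms on either side, and $\cat{C}(I, I) = \{0, 1\}$ forces every coherence morphism at a tensor power of $I$ to trivialize.
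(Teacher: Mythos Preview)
Your proposal is correct and follows essentially the same route as the paper: both arguments compute the images of the coherence isomorphisms under $E$ by evaluating at atoms, using naturality of the associator and braiding in $\cat{C}$ to rewrite $a_{X,Y,Z} \circ ((x \tensor y) \tensor z)$ as $x \tensor (y \tensor z)$ (and similarly for $b_{X,Y}$), after which the hexagon and symmetry square commute by direct function application. Your write-up is in fact slightly more thorough than the paper's, since you explicitly check naturality of $\mu$ and mention the unit triangles, both of which the paper leaves implicit; one minor point of notation is that when you say ``by naturality of $\alpha$'' and ``$\alpha_{I,I,I}$ corresponds to the scalar $1$'', you mean the associator $a$ of $\cat{C}$, not the associator $\alpha$ of $\cat{Rel}$---the paper keeps these distinct, and you should too.
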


\begin{proof}
For all objects $X$, $Y$ and $Z$, let $a_{X, Y, Z}\: (X \tensor Y) \tensor Z \to X \tensor (Y \tensor Z)$ be the associator in $\cat{C}$, and for all sets $L$, $M$, and $N$, let $\alpha_{L, M, N}\: (L \times M) \times N \to L \times (M \times N)$ be the associator in $\cat{Rel}$. We prove that the following diagram commutes:
$$
\begin{tikzcd}[column sep = 10em]
(E(X) \times E(Y)) \times E(Z)
\arrow{r}{\alpha_{E(X), E(Y), E(Z)}}
\arrow{d}[swap]{\mu_{X,Y} \times \id_Z}
&
E(X) \times (E(Y) \times E(Z))
\arrow{d}{\id_X \times \mu_{Y,Z}}
\\
E(X \tensor Y) \times E(Z)
\arrow{d}[swap]{\mu_{X \tensor Y, Z}}
&
E(X) \times E(Y \tensor Z)
\arrow{d}{\mu_{X, Y \tensor Z}}
\\
E((X \tensor Y) \tensor Z)
\arrow{r}[swap]{E(a_{X, Y, Z})}
&
E(X \tensor (Y \tensor Z))
\end{tikzcd}
$$
The six morphisms in this diagram are binary relations that are functions. In particular, $E(a_{X, Y, Z})$ consists of pairs $(((x_1 \tensor y_1) \tensor z_1), (x_2 \tensor (y_2 \tensor z_2)))$ that satisfy the following equivalent conditions:
\begin{align*}
(x_2 \tensor (y_2 \tensor z_2))^\dag \circ a_{X, Y, Z}\circ & ((x_1 \tensor y_1) \tensor z_1) = 1
\\ & \EV
(x_2 \tensor (y_2 \tensor z_2))^\dag \circ (x_1 \tensor (y_1 \tensor z_1)) = 1
\\ & \EV
((x_2^\dag \circ x_1) \tensor ((y_2^\dag \circ y_1) \tensor (z_2^\dag \circ z_1))= 1
\\ & \EV
x_1 = x_2 \; \text{and} \; y_1 = y_2 \; \text{and} \, z_1 = z_2.
\end{align*}
We can now prove that the diagram commutes via function application. We simply compute that for all $x \in E(X)$, $y \in E(Y)$, and $z \in E(Z)$, we have that
\begin{align*}&
(E(a_{X,Y,Z}) \circ \mu_{X \tensor Y, Z} \circ (\mu_{X,Y} \times \id_Z))((x, y), z)
=
(E(a_{X,Y,Z}) \circ \mu_{X \tensor Y, Z})(x \tensor y, z)
\\ & = 
E(a_{X,Y,Z})((x \tensor y) \tensor z)
=
x \tensor (y \tensor z)
=
\mu_{X, Y \tensor Z}(x, y \tensor z)
\\ & =
(\mu_{X, Y \tensor Z} \circ (\id_X \times \mu_{Y,Z}))(x, (y, z))
=
(\mu_{X, Y \tensor Z} \circ (\id_X \times \mu_{Y,Z}) \circ a_{E(X), E(Y), E(Z)})((x,y), z).
\end{align*}

We conclude that $E$ together with the natural bijection $\mu_{X, Y}\: E(X) \times E(Y) \to E(X \tensor Y)$ is a strong monoidal functor. The canonical bijection $\{\ast\} \to E(I)$ for this monoidal functor is evidently the unique such bijection \cite{EtingofGelakiNikshychOstrik}*{section 2.4}.

We verify that $E$ respects the braiding. For all objects $X$ and $Y$, let $b_{X,Y}\: X \tensor Y \to Y \tensor X$ be the braiding in $\cat{C}$, and for all sets $M$ and $N$, let $\beta_{M, N}\: M \times N \to N \times M$ be the braiding in $\cat{Rel}$. We prove that the following diagram commutes:
$$
\begin{tikzcd}[column sep = 6em]
E(X) \times E(Y)
\arrow{d}[swap]{\mu_{X,Y}}
\arrow{r}{\beta_{E(X), E(Y)}}
&
E(Y) \times E(X)
\arrow{d}{\mu_{Y,X}}
\\
E(X \tensor Y)
\arrow{r}[swap]{E(b_{X,Y})}
&
E(Y \tensor X)
\end{tikzcd}
$$
As before, the four morphisms in this diagram are binary relations that are functions. In particular, $E(b_{X,Y})$ consists of pairs $(x_1 \tensor y_1, y_2 \tensor x_2)$ that satisfy the following equivalent conditions:
\begin{align*}&
(y_2 \tensor x_2)^\dag \circ b_{X,Y} \circ (x_1 \tensor y_1) = 1
\EV
(y_2 \tensor x_2)^\dag \circ (y_1 \tensor x_1) = 1
\\ & \EV
(y_2^\dag \circ y_1) \tensor (x_2^\dag \circ x_1) = 1
\EV
x_1 = x_2 \text{ and } y_1 = y_2.
\end{align*}
We can now prove that the diagram commutes via function application. We simply compute that for all $x \in E(X)$ and $y \in E(Y)$, we have that
\begin{align*}
(E(b_{X,Y}) \circ \mu_{X,Y})(x,y)
=
E(b_{X,Y})(x \tensor y) 
=
y \tensor x
=
\mu_{Y, X}(y, x)
=
(\mu_{Y,X} \circ \beta_{E(X), E(Y)})(x, y).
\end{align*}
Therefore, $E$ is a strong symmetric monoidal functor.
\end{proof}

\begin{theorem}\label{V}
Let $(\cat{C}, \tensor, I, \dag)$ be a dagger compact closed category. If
\begin{enumerate}
\item each family of objects has a dagger biproduct,
\item each morphism has a kernel that is dagger monic,
\item $k$ and $k^\perp$ are jointly epic for each dagger kernel $k$,
\item $I$ is nonzero,
\item each nonzero morphism $I \to I$ is invertible,
\item $I$ is a separator,
\end{enumerate}
then the functor $E\: \cat{C} \to \cat{Rel}$ of Definition~\ref{P} is a strong symmetric monoidal dagger equivalence. Conversely, it is routine to verify that $(\cat{Rel}, \times, \{\ast\}, \dag)$ is a dagger compact closed category satisfying (1)--(6).
\end{theorem}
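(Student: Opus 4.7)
The plan is to recognize that hypotheses (1)--(6), together with dagger compact closedness, are exactly the standing assumptions in force throughout Sections 3 and 4, so the forward direction is just a packaging of the earlier propositions. The conclusion splits into three claims: (a) $E$ is a dagger functor $\cat{C} \to \cat{Rel}$; (b) $E$ is an equivalence of categories; (c) $E$, together with the comparison isomorphisms, is a strong symmetric monoidal functor. Claim (a) is Lemma~\ref{R}; claim (b) is Proposition~\ref{S}, which gives faithfulness, fullness, and essential surjectivity; claim (c) is Proposition~\ref{U}, which furnishes both the monoidal unit comparison $\{\ast\} \to E(I)$ and the natural tensor comparison $\mu_{X,Y}\: E(X) \times E(Y) \to E(X \tensor Y)$, and verifies coherence with the associators and braidings. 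Assembling (a)--(c) delivers that $E$ is a strong symmetric monoidal dagger equivalence, with no additional computation required.

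For the converse, I would verify that $(\cat{Rel}, \times, \{\ast\}, \dagger)$ is dagger compact closed and satisfies (1)--(6). The dagger compact structure is standard: each set $X$ is its own dagger dual via $\eta_X\: \ast \mapsto \{(x,x) \suchthat x \in X\}$. For (1), the disjoint union $\bigsqcup_\alpha X_\alpha$ with the canonical injections provides a dagger biproduct indexed by any set. For (2) and (3), the kernel of a relation $r\: X \to Y$ is the inclusion of the set of $x \in X$ such that $x$ is not in the domain of $r$, which is a dagger monic injection, and $k$ together with $k^\perp$ partition $X$ and so are jointly epic. For (4)--(6), the singleton $\{\ast\}$ is nonzero, its only endomorphisms are the empty relation and the identity (the latter invertible), and it is a separator since morphisms $\{\ast\} \to X$ enumerate the subsets of $X$.

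The main obstacle would have been establishing the Boolean structure of $\cat{C}(I, X)$ and its atomicity, plus the compatibility of $E$ with the tensor product, but all of that has been discharged already by Theorems~\ref{L} and~\ref{O} and by Lemma~\ref{T} and Proposition~\ref{U}. Thus Theorem~\ref{V} itself is essentially a summary theorem, and the proof reduces to citing the relevant earlier results and remarking on the routine check for $\cat{Rel}$.
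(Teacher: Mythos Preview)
Your proposal is correct and matches the paper's approach exactly: the paper's proof is simply ``Combine Propositions~\ref{S} and~\ref{U},'' and you have identified precisely these ingredients (with Lemma~\ref{R} already subsumed in Proposition~\ref{S}), together with the routine verification for $\cat{Rel}$ that the paper also leaves to the reader.
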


\begin{proof}
Combine Propositions \ref{S} and \ref{U}.
\end{proof}

Assuming sufficient choice, the adjoint of $E$ \cite{MacLane2}*{Theorem IV.4.1} can be selected to be a dagger functor \cite{Vicary}*{Lemma 5.1} and can then be made a strong symmetric monoidal functor \cite{EtingofGelakiNikshychOstrik}*{Remark 2.4.10}. As corollary of Theorem~\ref{V}, we obtain a characterization of $\cat{Rel}$ that is more in the spirit of mathematical logic.

\begin{corollary}\label{W}
Let $(\cat{C}, \tensor, I, \dag)$ be a dagger compact closed category. If
\begin{enumerate}[\quad\;\,(1')]
\item each family of objects has a dagger biproduct,
\item $I$ is simple and separating,
\item each object $X$ has a unique morphism $\top_X\: I \to X$ such that $\coker(\top_X) = 0$,
\item each morphism $a\: I \to X$ has a dagger isomorphism $i\: A \oplus B \to X$ such that
$$
\begin{tikzcd}
I
\arrow{r}{a}
\arrow{d}[swap]{\top_A}
&
X
\\
A
\arrow{r}[swap]{\inc_1}
&
A \oplus B,
\arrow{u}[swap]{i}
\end{tikzcd}
$$
\end{enumerate}
then the functor $E\: \cat{C} \to \cat{Rel}$ of Definition~\ref{P} is a strong symmetric monoidal dagger equivalence. Conversely, it is routine to verify that $(\cat{Rel}, \times, \{\ast\}, \dag)$ is a dagger compact closed category satisfying (1')--(4').
\end{corollary}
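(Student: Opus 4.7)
The plan is to reduce Corollary~\ref{W} to Theorem~\ref{V}. The hypothesis (1') is axiom (1), and---reading (2') to mean that $I$ is nonzero, that every nonzero endomorphism of $I$ is invertible, and that $I$ is a separator---(2') packages axioms (4), (5), and (6). Thus only axioms (2) and (3) need to be derived from (3') and (4'). Before doing so, I would note that Propositions~\ref{C}, \ref{D}, \ref{E}, and \ref{G} are already available: their proofs rely only on (1), (4), (5), (6), so each hom-set is a complete lattice in which composition and dagger preserve joins, and Proposition~\ref{G} gives $r = 0$ if and only if $r \circ \top_X = 0$.

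For axiom (3), fix a dagger kernel $k\: K \to X$. Apply (4') to $k \circ \top_K\: I \to X$ to obtain a dagger isomorphism $i\: A \oplus B \to X$ with $k \circ \top_K = i \circ \inc_1 \circ \top_A$, and set $j = i \circ \inc_1$ and $j^\perp = i \circ \inc_2$; these are orthogonal dagger kernels with $[j, j^\perp] = i$ a dagger isomorphism. From $k \circ \top_K = j \circ \top_A$ and $j^{\perp\dagger} \circ j = 0$, Proposition~\ref{G} yields $j^{\perp\dagger} \circ k = 0$; decomposing along the biproduct $A \oplus B$ then produces a factorization $k = j \circ \phi$ with $\phi = j^\dagger \circ k$. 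A short computation shows that $\phi$ is dagger monic, and $\phi \circ \top_K = j^\dagger \circ k \circ \top_K = \top_A$. Since (3') forces $\top_A$ to be epic (as $\coker(\top_A) = 0$), $\phi$ is epic. An epic dagger monomorphism is a dagger isomorphism, so $\phi$ is a dagger isomorphism; together with the analogous identification $k^\perp \cong j^\perp$, this yields a dagger isomorphism $[k, k^\perp]\: K \oplus K^\perp \to X$, proving that $k$ and $k^\perp$ are jointly epic.

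For axiom (2), given $r\: X \to Y$, define
\[
k^r = \bigvee \{a \: I \to X \suchthat r \circ a = 0\}
\]
in the complete lattice $\cat{C}(I, X)$. Composition preserves joins, so $r \circ k^r = 0$. Apply (4') to $k^r$ to obtain a dagger isomorphism $i\: A \oplus B \to X$ with $k^r = i \circ \inc_1 \circ \top_A$, and set $j = i \circ \inc_1$. Then $r \circ j \circ \top_A = r \circ k^r = 0$, so $r \circ j = 0$ by Proposition~\ref{G}. For the universal property, suppose $f\: Z \to X$ satisfies $r \circ f = 0$: for each $a\: I \to Z$, $f \circ a$ lies in the set defining $k^r$, so $f \circ a \leq j \circ \top_A$; monotonicity gives $j^{\perp\dagger} \circ f \circ a = 0$ for every $a$, whence $j^{\perp\dagger} \circ f = 0$ because $I$ is a separator, and the biproduct decomposition yields $f = j \circ (j^\dagger \circ f)$. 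Uniqueness follows from $j$ being monic. Hence $j = \ker(r)$ is a dagger kernel, axiom (2) holds, and Theorem~\ref{V} completes the proof.

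The main obstacle is the argument for axiom (3), specifically showing that the comparison map $\phi\: K \to A$ is a dagger isomorphism---this is where hypothesis (3') becomes essential, ensuring $\top_A$ is epic and thereby forcing $\phi$ to be epic, which combines with its dagger-monic structure to give a dagger isomorphism.
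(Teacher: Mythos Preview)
Your reduction to Theorem~\ref{V} is the right idea, and your argument for axiom (2) via $k^r = \bigvee\{a \suchthat r\circ a = 0\}$ is correct (and pleasantly different from the paper's approach, which instead applies (4') to $r\circ\top_X$ on the codomain side to build a cokernel directly).  However, your argument for axiom (3) has a genuine gap.

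The claim ``(3') forces $\top_A$ to be epic (as $\coker(\top_A) = 0$)'' is false.  The condition $\coker(\top_A)=0$ says only that $g\circ\top_A=0 \Rightarrow g=0$; it does \emph{not} give right-cancellability $g_1\circ\top_A = g_2\circ\top_A \Rightarrow g_1=g_2$, because we have no subtraction here.  Concretely, in $\cat{Rel}$ with $A=\{1,2\}$ and $Z=\{\ast\}$, the relations $g_1=\{(1,\ast),(2,\ast)\}$ and $g_2=\{(1,\ast)\}$ satisfy $g_1\circ\top_A=g_2\circ\top_A$ but $g_1\neq g_2$.  So you cannot conclude that $\phi$ is epic this way, and the step ``epic dagger mono $\Rightarrow$ dagger iso'' does not fire.

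The conclusion $\phi$ is a dagger isomorphism is nonetheless true; you just need to use that $k$ is a \emph{kernel}.  Say $k=\ker(s)$.  From $s\circ k=0$ and $k\circ\top_K = j\circ\top_A$ you get $s\circ j\circ\top_A = 0$, hence $s\circ j = 0$ by Proposition~\ref{G} (this direction of \ref{G} is valid---it only needs the special case $g\circ\top_A=0\Rightarrow g=0$).  So $j$ factors through $\ker(s)=k$, say $j=k\circ\psi$.  Combining with $k=j\circ\phi$ and cancelling the monics $k$ and $j$ gives $\psi\circ\phi=\id_K$ and $\phi\circ\psi=\id_A$; together with $\phi^\dagger\circ\phi=\id_K$ this yields $\phi^\dagger=\psi=\phi^{-1}$.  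Alternatively, prove (2) first and then observe that your constructed dagger kernel of $s$ is of the form $i\circ\inc_1$, so any dagger kernel $k$ of $s$ differs from it by a dagger isomorphism---this is essentially how the paper handles (2) and (3) in one stroke.
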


We may gloss these conditions as expressing that (1') disjoint unions of sets exist, (2') the monoidal unit is a singleton set, (3') every set has a coempty predicate, and (4') every predicate on a set determines a subset of that set. In particular, condition (4') recalls the axiom of separation in set theory.

\begin{proof}[Proof of Corollary~\ref{W}]
Assume (1')--(4'). First, we claim that for all $a\: I \to X$, if $a^\dag \circ a = 0$, then $a = 0$. Applying assumption (4'), we write $a = i \circ \inc_1 \circ \top_A$, where $i$ is a dagger isomorphism and $\coker(\top_A) = 0$. Assume $a^\dag \circ a = 0$. Then, $0 = a^\dag \circ a = \top_A^\dag \circ {\inc_1}^\dag \circ i^\dag \circ i \circ \inc_1 \circ \top_A = \top_A^\dag \circ \top_A$. It follows that $\top_A^\dag$ factors through $0$. Thus, $\top_A$ and hence $a$ factor through $0$. We have established our first  claim.

Second, we claim that there are exactly two morphisms $I \to I$, namely, $0:= 0_I \neq \id_I$ and $1 := \top_I = \id_I$. Let $a\: I \to I$. By assumption (2'), $\coker(a) = \;!\: I \to 0$ or $\coker(a) = \id_I\: I \to I$ up to isomorphism. In the former case, $a = \top_I$ by assumption (3'), and in the latter case, $a = 0_I$. In particular, $\id_I = \top_I$ or $\id_I = 0_I$. In the latter case, $I \iso 0$, contradicting assumption (2'). Therefore, $\id_I \neq 0_I$, and hence $\id_I = \top_I$. We have established our second claim.

Thus, $(\cat{C}, \tensor, I, \dag)$ is a dagger compact closed category that satisfies assumptions (1), (4), (5), and (6) of Theorem~\ref{V}. It remains to show that $(\cat{C}, \tensor, I, \dag)$ satisfies assumptions (2) and (3) of Theorem~\ref{V}.

Let $r\: X \to Y$. Let $a =  r \circ \top_X$. By assumption (4'), there exists a dagger isomorphism $i\: A \oplus B \to Y$ such that $a = i \circ \inc_1 \circ \top_A$. We claim that $\inc_2^\dag \circ i^\dag$ is a cokernel of $r$. First, we  calculate that $\inc_2^\dag \circ i^\dag \circ r \circ \top_X = \inc^\dag \circ i^\dag \circ a = \inc_2^\dag \circ i^\dag \circ i \circ \inc_1 \circ \top_A = \inc_2^\dag \circ \inc_1 \circ \top_A = 0$. By assumption (3'), we have that $\inc_2^\dag \circ i^\dag \circ r = 0$.

Let $s\: Y \to Z$ be such that $s \circ r = 0_{X, Z}$. It follows that $s \circ i \circ \inc_1 \circ \top_A = s \circ a = s \circ r \circ \top_X = 0$. By assumption (3'), we have that $s \circ i \circ \inc_1 = 0$. As for any dagger biproduct of two objects, we have that $\coker(\inc_1)  = \inc_2^\dag$, and thus, $s \circ i = t \circ \inc_2^\dag$ for some morphism $t$. We conclude that $s = s \circ i \circ i^\dag = t \circ \inc_2^\dag \circ i^\dag$.

Therefore, $ \inc_2^\dag \circ i^\dag$ is a cokernel of $r$, as claimed. In other words $i \circ \inc_2$ is a kernel of $r^\dag$. The kernel $i \circ \inc_2$ is dagger monic, and hence we have verified assumption (2) of Theorem~\ref{V}. Furthermore, as for any dagger biproduct of two objects, we have that $\inc_1$ and $\inc_2$ are jointly epic and that $\inc_2^\perp = \inc_1$. Hence, $i \circ \inc_1$ and $i \circ \inc_2$ are jointly epic and $(i \circ \inc_2)^\perp = i \circ \inc_1$. We conclude that every dagger kernel is jointly epic with its orthogonal complement, verifying assumption (3) of Theorem~\ref{V}.

We have verified the assumptions of Theorem~\ref{V}, and we now apply it to obtain the desired conclusion.
\end{proof}

We now complete the proof of Theorem~\ref{A}, which provides comparable characterizations of the dagger symmetric monoidal categories $(\cat{Rel}, \times, \{\ast\}, \dag)$ and $(\cat{Hilb}_\FF, \tensor, \FF^1, \dag)$ for $\FF = \RR, \CC$.

\begin{proof}[Proof of Theorem~\ref{A}]
Axiom~D is just the existence of binary dagger biproducts \cites{Selinger, AbramskyCoecke2}. Indeed, any binary coproduct whose inclusions are orthogonal dagger kernels is clearly a dagger biproduct. Conversely, the inclusions of a binary dagger biproduct are orthogonal dagger kernels \cite{HeunenVicary}*{exercise 2.6}. By the same argument, the condition that every family of objects has a coproduct whose inclusions are pairwise-orthogonal dagger kernels is just the existence of all dagger biproducts. The backward implication of statement (i) is thus a corollary of Theorem~\ref{V}; the proof of the forward implication is routine.

Statement (ii) is a corollary of \cite{HeunenKornell}*{Theorem 10}: Assume axioms A--G, that every dagger monomorphism is a dagger kernel, and that the wide subcategory of dagger kernels has directed colimits. Then, $I$ is simple as a consequence of axioms E and F. Furthermore, there is a morphism $z\: I \to I$ such that $1 + z =0$. Indeed, suppose that there is no such morphism $z$, and let $\Delta_4\: I \to I \oplus I \oplus I \oplus I$ be the diagonal map. Then, $\Delta_4/2$ is a dagger monomorphism and hence a dagger kernel. Its cokernel is zero because $I$ is a separator and $(\Delta_4/2)^\dag \circ v \neq 0$ for all nonzero $v\: I \to I \oplus I \oplus I \oplus I$. Thus, $\Delta_4/2$ is an isomorphism \cite{HeunenJacobs}*{Lemma~2.3(iv)}, which contradicts the assumption that $I$ is nonzero. We conclude that $1$ has an additive inverse in $\cat{C}(I, I)$. It follows that each parallel pair of morphisms, $f$ and $g$, has a dagger equalizer, which is equal to the dagger kernel of $f-g$. Therefore, by \cite{HeunenKornell}*{Theorem 10}, $(\cat{C}, \tensor, I, \dag)$ is equivalent to $(\cat{Hilb_\FF}, \tensor, \FF^{1}, \dag)$ for $\FF = \RR$ or $\FF = \CC$. We have proved the backward implication of statement (ii); the proof of the forward implication is routine.
\end{proof}

\begin{remark}
It is routine to verify that the dagger compact closed category $(\cat{Rel}, \times, \{\ast\}, \dag)$ also has the property that the wide subcategory of dagger kernels has directed colimits. Indeed, the latter category is simply the category of sets and injections.
\end{remark}

\section*{Acknowledgements}

I thank 
John Baez,
Chris Heunen,
Martti Karvonen,
Bert Lindenhovius, and
Morgan Rogers
for their comments.
I am grateful to Bert Lindenhovius for noticing an issue in the proof of Proposition~\ref{E} in the original preprint.

\end{document}